\documentclass[11pt]{article}
\usepackage[T1]{fontenc}
\usepackage{lmodern}
\usepackage{amsmath,amssymb,amsthm,mathtools}
\usepackage{array,booktabs}
\usepackage{graphicx}
\usepackage{float}
\usepackage{placeins}
\usepackage{microtype}
\usepackage[margin=1in]{geometry}
\usepackage{authblk}
\usepackage[hidelinks]{hyperref}
\usepackage{url}

\newtheorem{theorem}{Theorem}
\newtheorem{lemma}[theorem]{Lemma}
\newtheorem{proposition}[theorem]{Proposition}
\newtheorem{corollary}[theorem]{Corollary}

\newcommand{\R}{\mathbb R}

\newcommand{\norm}[1]{\lVert #1\rVert}

\title{A counterexample to global convergence of classical DFP
under the standard strong Wolfe conditions}

\author[1]{Benqi Liu}
\author[2]{Zichen Wang}
\author[1]{Zaiwen Wen}
\author[3,4]{Liwei Zhang}
\author[5]{Yaxiang Yuan}
\affil[1]{Beijing International Center for Mathematical Research,
Peking University, Beijing 100871, China;
\texttt{bqliu@pku.edu.cn}; \texttt{wenzw@pku.edu.cn}}
\affil[2]{School of Mathematical Sciences, Peking University,
Beijing 100871, China;
\texttt{zichenwang25@stu.pku.edu.cn}}
\affil[3]{National Frontiers Science Center for Industrial Intelligence and
Systems Optimization, Northeastern University, Shenyang 110819, China.}
\affil[4]{Key Laboratory of Data Analytics and Optimization for Smart Industry
(Northeastern University), Ministry of Education, Shenyang 110819, China;
\texttt{zhanglw@mail.neu.edu.cn}}
\affil[5]{Academy of Mathematics and Systems Science,
Chinese Academy of Sciences, Beijing 100190, China;
\texttt{yyx@lsec.cc.ac.cn}}
\date{}

\begin{document}
\maketitle

\begin{abstract}
A long-standing open question in quasi-Newton optimization asks whether the
classical Davidon--Fletcher--Powell (DFP) method converges globally on uniformly
convex objectives when all accepted steps satisfy the standard weak Wolfe
conditions.  We show that the answer is no, even under the standard strong
Wolfe conditions.  Fix $0<c_1<2/3$ and $2/3\le c_2<1$.  We construct a
function $f\in C^2(\R^2)$ such that
$\frac12 I\preceq\nabla^2 f(x)\preceq\frac32 I$ for all $x\in\R^2$.  We also
choose a fixed positive definite initial inverse Hessian approximation and
a sequence of positive step lengths.  The classical DFP iteration is well
defined, and all
accepted steps satisfy the standard strong Wolfe conditions, but
$\|\nabla f(x_k)\|$ converges to a positive constant.  The global Hessian
condition number is at most three.  The construction uses an
alternating two-step DFP sequence near a one-dimensional invariant center
manifold.  Along this sequence, the smaller eigenvalue of the inverse Hessian
approximation tends to zero.  The changes in the gradient norm between cycle
starts are summable, but the total rotation of the associated eigenvectors is
unbounded.  The accumulation points of the DFP sequence form a circle.  A
uniform separation bound allows us to interpolate the prescribed function
values and gradients.  We add smooth functions with pairwise disjoint supports
to a quadratic and keep the global Hessian bounds.  An affine change of
variables gives an identity-initialized example with problem-dependent Hessian
bounds.  An orthogonal direct sum extends the result to every dimension
$n\ge2$.
\end{abstract}

\noindent\textbf{Keywords.}
Davidon--Fletcher--Powell method; quasi-Newton method; strong Wolfe
conditions; global convergence; uniform convexity; counterexample.

\medskip
\noindent\textbf{Mathematics Subject Classification (2020).}
90C53; 65K05; 90C30.

\section{Introduction}
\label{sec:introduction}

Does the classical Davidon--Fletcher--Powell (DFP) method always converge
on every uniformly convex objective when each accepted step satisfies the
standard weak Wolfe conditions?  This is a long-standing open question in the
global convergence theory of quasi-Newton methods.  Surveys by Nocedal,
Fletcher, and Yuan record the question
\cite{Nocedal1992,Fletcher1994,Yuan1999}.  A 2026 paper still describes it as
open \cite{YuanZhouPham2026}.  That paper proves convergence for a projected
and corrected DFP method, not for the classical method.  The open question
concerns the unmodified DFP method, an arbitrary positive definite initial
inverse Hessian approximation, and every sequence of step lengths that
satisfies the weak Wolfe conditions.  DFP is one of the earliest
variable-metric methods for unconstrained optimization
\cite{Davidon1991,FletcherPowell1963}.  The curvature inequality $s_k^Ty_k>0$
keeps the inverse Hessian approximation positive definite after each update.
The local convergence theory of DFP is well developed.  The main difficulty
is global.  Powell's convergence theorem for uniformly convex objectives uses
exact line search \cite{Powell1971}.  Exact line search gives the orthogonality
identity $g_{k+1}^Ts_k=0$.  This identity removes a positive term from the DFP
estimates, but the term remains under an inexact Wolfe line search.  Powell's
inexact-line-search convergence theorem for the
Broyden--Fletcher--Goldfarb--Shanno (BFGS) method \cite{Powell1976} does not
cover DFP.

Existing global convergence results either exclude DFP or add assumptions to
the two standard Wolfe inequalities.  Byrd, Nocedal, and Yuan proved a
self-correction property for a restricted convex Broyden class, but they
excluded DFP \cite{ByrdNocedalYuan1987}.  Yuan's DFP results require further
properties of the generated sequence.  These include eventual monotonicity of
the gradient norm and finite total step length \cite{Yuan1995}.  Xu assumes
eventual monotonicity of an iteration-dependent scalar and also requires
$c_2<(m/M)^3$ under $mI\preceq\nabla^2f\preceq MI$ \cite{Xu1997}.  Other
results use metric-dependent Wolfe parameters or special inexact line searches
\cite{Pu2002,LiuJingHan2002}.  Pu and Yu also assume that the DFP iterate
sequence converges \cite{PuYu1990}.  In two dimensions, Powell proved
$\liminf_{k\to\infty}\|g_k\|=0$ when each line search returns the first local
minimizer on the search line \cite{Powell2000}.  This rule again gives
$g_{k+1}^Ts_k=0$.  None of these results proves convergence of classical DFP
under only the two standard Wolfe inequalities.

This paper gives a negative answer.  For every $0<c_1<2/3$ and
$2/3\le c_2<1$, Theorem~\ref{thm:main} constructs a function
$f\in C^2(\R^2)$ satisfying $\frac12I\preceq\nabla^2f\preceq\frac32I$ and a
classical DFP sequence whose gradient norms converge to a positive constant.
Every accepted step satisfies the standard strong Wolfe conditions, so the
failure persists under a stronger curvature condition than the one posed in
the original question.  The Hessian condition number is uniformly bounded by
three.  The line-search requirement concerns only the accepted step lengths:
they satisfy the Wolfe inequalities at their endpoints.  We do not claim that
a specific line-search implementation, with its own trial steps and
interpolation rules, selects these steps.  The initial inverse Hessian
approximation in the main construction is positive definite, but it is not the
identity.  Corollary~\ref{cor:identity-initialization} uses an affine change of
variables to obtain $H_0=I$.  The transformed objective still has a uniformly
positive definite and globally bounded Hessian, but its bounds depend on the
original $H_0$.  The theorem gives the fixed Hessian bounds.  The corollary
gives identity initialization with problem-dependent bounds.

The construction has two main tasks.  First, a finite sequence of nearly
orthogonal DFP search directions does not prove nonconvergence, even if it is
very long.  We construct an infinite two-step DFP sequence in which the
smaller eigenvalue of $H_k$ tends to zero.
The changes in the gradient norm between successive cycle starts form a
summable sequence.  The sum of the corresponding eigenvector rotation angles
diverges.  The remaining asymptotic estimates then show that the accumulation
points form a circle.  Second, the prescribed secant pairs must come from one
$C^2$, uniformly convex objective on $\R^2$.  A uniform separation estimate
allows us to interpolate the prescribed function values and gradients by
adding smooth functions with pairwise disjoint supports.  This construction
keeps the global Hessian bounds.  The sequence does not satisfy the extra
assumptions in the known convergence results.  Its gradient norm decreases on
the first step of each cycle and increases on the second.  Its total step
length is infinite, and the chosen range of $c_2$ violates Xu's extra
restriction.  We use neither exact line search nor a search for the first
local minimizer on the search line.  Section~\ref{sec:known-results} checks
these facts after the construction is complete.

The classical DFP question and the main result are stated in
Section~\ref{sec:statement}.  Section~\ref{sec:strategy} gives the basic DFP
identities and an outline of the construction.  We develop the nonconvergent
sequence and its asymptotic behavior in
Section~\ref{sec:alternating-sequence}.  In
Section~\ref{sec:global-objective}, we build one $C^2$, uniformly convex
objective on $\R^2$ that matches the prescribed function values and gradients.
Section~\ref{sec:completion} completes the
proof.  It verifies the strong Wolfe conditions, proves nonconvergence, gives
the higher-dimensional and identity-initialized versions, and compares the
example with known sufficient conditions.  The numerical experiments appear
in Section~\ref{sec:numerics}.  Section~\ref{sec:conclusion} closes the paper,
and Appendix~\ref{app:algebra} checks the algebra of the two-step DFP
recurrence.

\section{The classical DFP question and its negative answer}
\label{sec:statement}

For $g_k=\nabla f(x_k)$, the classical DFP method written with an inverse
Hessian approximation is
\begin{align*}
d_k&=-H_kg_k,&s_k&=\alpha_kd_k,&x_{k+1}&=x_k+s_k,\\
y_k&=g_{k+1}-g_k,&
H_{k+1}&=H_k-
\frac{H_ky_ky_k^TH_k}{y_k^TH_ky_k}
+\frac{s_ks_k^T}{s_k^Ty_k}.
\end{align*}
The standard weak Wolfe conditions \cite{Wolfe1969} are
\begin{align}
f(x_{k+1})&\le f(x_k)+c_1g_k^Ts_k,\label{eq:armijo}\\
g_{k+1}^Ts_k&\ge c_2g_k^Ts_k,
\label{eq:weak-curvature}
\end{align}
where $0<c_1<c_2<1$.
The standard strong Wolfe conditions use \eqref{eq:armijo} and replace
\eqref{eq:weak-curvature} by
\begin{equation}\label{eq:strong-curvature}
|g_{k+1}^Ts_k|\le c_2|g_k^Ts_k|.
\end{equation}
In this paper, a line search accepts a step when its positive step length
satisfies the stated inequalities at $x_k$ and $x_{k+1}$.  We do not tie this
definition to a particular bracketing, interpolation, or zoom procedure.  If
$H_k\succ0$ and $g_k\ne0$, then
$g_k^Td_k<0$.  Either curvature condition
implies
\[
s_k^Ty_k=(g_{k+1}-g_k)^Ts_k
\ge (c_2-1)g_k^Ts_k>0,
\]
so the DFP denominators are positive and $H_{k+1}\succ0$.  In our
construction, positivity also follows directly from the prescribed secant
pairs.

The classical open question is the following.  Suppose
$f\in C^2(\R^n)$ satisfies
$mI\preceq\nabla^2f(x)\preceq MI$ on the relevant level set for constants
$0<m\le M<\infty$, let $H_0\succ0$ be arbitrary, and let every accepted
positive step satisfy \eqref{eq:armijo}--\eqref{eq:weak-curvature}.  Must the
unmodified DFP iterates satisfy
\begin{equation}\label{eq:open-question}
\|\nabla f(x_k)\|\longrightarrow0
\end{equation}
for every sequence of step lengths that satisfies these conditions?  A
uniformly convex objective has a unique minimizer, so
\eqref{eq:open-question} is equivalent to
convergence of the iterates to that minimizer.  The statement covers every
initial inverse Hessian approximation and every accepted weak Wolfe step.  A
counterexample needs only one uniformly convex objective, one positive
definite initial matrix, and one infinite sequence of valid steps for which
\eqref{eq:open-question} fails.  The main theorem gives such an example.  Its
accepted steps even satisfy the standard strong Wolfe conditions.

\begin{theorem}[Main theorem]\label{thm:main}
For every pair of constants satisfying
\begin{equation}\label{eq:wolfe-range}
0<c_1<\frac23,\qquad \frac23\le c_2<1,
\end{equation}
there exist $f\in C^2(\R^2)$, $x_0\in\R^2$, $H_0\succ0$, and positive step
lengths $\alpha_k$ such that
\[
\frac12I\preceq\nabla^2f(x)\preceq\frac32I
\qquad(x\in\R^2),
\]
all classical DFP iterates are well defined, every step satisfies
\eqref{eq:armijo} and \eqref{eq:strong-curvature} with these constants, but
there is a constant $G_\infty>0$ such that
\[
\|\nabla f(x_k)\|\longrightarrow G_\infty.
\]
The same conclusion holds in every dimension $n\ge2$ by an orthogonal direct
sum.
\end{theorem}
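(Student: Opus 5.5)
The proof splits into two parts. First we build a purely algebraic DFP sequence: points $x_k$, gradients $g_k$, matrices $H_k$, and step lengths $\alpha_k$ that obey the DFP recurrence exactly. Second we show that some $C^2$ function $f$ with $\frac12I\preceq\nabla^2f\preceq\frac32I$ realizes these data, i.e.\ $f(x_k)=f_k$ and $\nabla f(x_k)=g_k$.

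For the sequence I work in $\R^2$ and treat the iteration in cycles of two steps. Along each cycle the true "secant Hessian" is taken to be the identity, so $y_k=s_k$. With $y_k=s_k$ the DFP update becomes a rank-two correction built from the pair $(H_ky_k,s_k)$. Write $H_k=\lambda_1 u u^T+\lambda_2 u^\perp u^{\perp T}$ with $\lambda_2=\varepsilon_k\ll1$, and place the gradient almost along $u^\perp$. Then $d_k=-H_kg_k$ is nearly orthogonal to $g_k$. The step length is chosen so that $g_{k+1}^Ts_k$ equals a fixed fraction of $g_k^Ts_k$, say $-\tfrac23$ at the boundary allowed by $c_2\ge2/3$. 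The first step moves along $d_k$ and the second step nearly undoes the gradient change in norm. I would then compute the two-step map in scaled coordinates (the appendix-type algebra). The goal is a recurrence of the form
\[
\varepsilon_{j+1}=\varepsilon_j(1-\kappa\varepsilon_j+O(\varepsilon_j^2)),\qquad \|g\|_{j+1}-\|g\|_j=O(\varepsilon_j^2),\qquad \theta_{j+1}-\theta_j=\beta\varepsilon_j+O(\varepsilon_j^2),
\]
where $\theta$ is the eigenvector angle. This gives $\varepsilon_j\sim 1/(\kappa j)$, so the changes in $\|g\|$ are summable while the total rotation $\sum\theta$ diverges. Hence $\|g_k\|\to G_\infty>0$, and the iterates wind around a circle instead of converging. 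In practice I would find a one-dimensional invariant center manifold of the scaled two-step map and show that orbits starting near it stay attracted to it. Along the way I would check directly that $s_k^Ty_k>0$, that strong curvature holds with ratio at most $2/3\le c_2$, and that Armijo holds. For Armijo, with the quadratic model $f_{k+1}-f_k=g_k^Ts_k+\tfrac12\|s_k\|^2$ and $g_{k+1}^Ts_k=-\tfrac23 g_k^Ts_k$, the identity $\|s_k\|^2=y_k^Ts_k=-\tfrac53 g_k^Ts_k$ gives $f_{k+1}-f_k=\tfrac16 g_k^Ts_k$. That only matches $c_1<1/6$, so the ratios must be tuned: the two steps of a cycle may use different curvature ratios, and in fact one step could make the directional derivative vanish. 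I would adjust the ratios to fit $c_1<2/3$, which is the threshold the statement suggests.

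For the realization I take $f=\tfrac12\|x\|^2+\text{const}+\sum_k\phi_k$. Each $\phi_k$ is a $C^2$ bump supported in a ball $B(x_k,r_k)$ and corrects the function value and gradient at $x_k$ relative to the quadratic. Because consecutive pairs satisfy $y_k=s_k$ exactly, most corrections are needed only to reconcile the cycles with the one global quadratic. The key estimate is uniform separation, $\|x_i-x_j\|\ge\delta>0$ for $i\neq j$, together with correction sizes whose value and gradient parts are $O(\delta^2)$ and $O(\delta)$ times a small constant. Then each bump has Hessian norm at most $\tfrac12$, the supports are disjoint, and the sum is $C^2$ with the required bounds.

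The main obstacle is this compatibility step. The data $(x_k,f_k,g_k)$ must be consistent with a single function of curvature near $1$, and that holds only if the discrepancy $g_k-x_k$ stays bounded and small relative to the separation. The iterates accumulate on a circle, so separation follows from step lengths bounded below, since $\|s_k\|$ is comparable to $\|g\|$ and the rotation is slow; I would establish that bound first. To make the discrepancies small I would choose the quadratic's center and the initial data carefully, and if needed rescale so that the asymptotic regime holds from $k=0$.

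For $n\ge2$ I take $f(x,z)=f_2(x)+\tfrac12\|z\|^2$ with $z_0=0$ and $H_0=\mathrm{diag}(H_0^{(2)},I)$. The $z$-components of the gradients stay zero, so the iteration reduces to the two-dimensional one.
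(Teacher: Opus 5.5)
\textbf{Main gap: the identity secant $y_k=s_k$.} This choice forces $x_{k+1}-g_{k+1}=x_k-g_k$, so all your data lie on the single quadratic $\frac12\norm{x-C}^2$; there is nothing to reconcile between cycles. On that quadratic the mechanism you want cannot occur in $\R^2$. The secant equation becomes $H_{k+1}s_k=s_k$, so for $k\ge1$ one eigenvalue of $H_k$ equals $1$ and the other is $\mu_k=\det H_k$. The DFP determinant identity gives $\mu_{k+1}=\mu_k\norm{s_k}^2/(s_k^TH_ks_k)$. The Rayleigh quotient lies between $\mu_k$ and $1$, so $\mu_k$ moves monotonically toward $1$. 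Hence the spectrum of $H_k$ stays in $[\min(1,\mu_1),\max(1,\mu_1)]$ and the small eigenvalue never decreases. Your target $\varepsilon_{j+1}=\varepsilon_j(1-\kappa\varepsilon_j+\cdots)$ is therefore impossible. Moreover, the angle between $d_k$ and $-g_k$ stays bounded away from $\pi/2$, so Zoutendijk's condition forces $\norm{g_k}\to0$.

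The missing idea is that the secant matrices must deviate from $I$ by terms tied to the current eigenbasis. The paper uses $A_1(\epsilon)$ with off-diagonal entry $\epsilon$ and $\tau_1=2/3$, then $A_2(\epsilon)$ with off-diagonal entry $-2\epsilon$ and $\tau_2=1/3$. At $\epsilon=0$ the two steps undo each other in the normalized variables $(r/\epsilon^2,p,h)$. The order-$\epsilon$ terms alone produce the drift $\epsilon_+=\epsilon-\frac32\epsilon^4$. This gives $G_{j+1}/G_j=1-\frac{13}{2}\epsilon_j^4$, which is summable, against a rotation of $-3\epsilon_j^2$ per cycle, which is not. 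Because $A_i\neq I$, the point $C_k=x_k-g_k$ is no longer constant; one only has $\norm{C_k-C_\infty}=O(\epsilon_j^3)$. That drift is the actual reason an interpolation step is needed.

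\textbf{The realization step also fails as written.} A uniform bound $\norm{x_i-x_j}\ge\delta>0$ is impossible for infinitely many iterates in a bounded set, in particular for iterates accumulating on a circle. Likewise, ``step lengths bounded below, with $\norm{s_k}$ comparable to $\norm{g}$'' contradicts the slow rotation you need; in the paper $\norm{s_k}=\Theta(\epsilon_j^2)$. What is required is a scale-dependent bound, $\operatorname{dist}(x_k,(\{x_\ell\}\cup\Gamma)\setminus\{x_k\})\ge c_*\epsilon_j^2$. Its hard case is iterates from different turns at comparable scales. The paper handles it by showing that one turn, about $2\pi/(3\epsilon_j^2)$ cycles, lowers $G$ by about $\frac{13\pi}{3}G_j\epsilon_j^2$. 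This dominates the $O(\epsilon_j^3)$ within-cycle radial errors. The corrections must then be small relative to this shrinking scale: $\norm{a_k}=O(\epsilon_j^3)$ against a support radius $\Theta(\epsilon_j^2)$, which gives bump Hessians of size $O(\epsilon_j)$. You must also verify that the sum has vanishing $2$-jet on the accumulation circle $\Gamma$ to get $C^2$ there.

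\textbf{The line-search ratios are left open.} The choice that works is $s_k^Ty_k/(-g_k^Ts_k)=2/3$ and $1/3$. The directional-derivative ratios are then $1/3$ and $2/3$, both undershooting the line minimum. The Armijo ratios tend to $1-\tau_k/2\in\{2/3,5/6\}$. Your overshooting choice gives $1/6$ and an exact step gives $1/2$, so neither covers all $c_1<2/3$.
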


\begin{proof}
Section~\ref{sec:alternating-sequence} constructs an infinite
two-dimensional sequence satisfying the classical DFP search-direction, secant, and update identities.  It also proves that the gradient norms converge to a positive constant.  Section~\ref{sec:global-objective} establishes a uniform separation bound and realizes the prescribed endpoint
data by a globally $C^2$ function satisfying
$\frac12 I\preceq\nabla^2 f\preceq\frac32 I$.
Section~\ref{sec:completion} verifies the Armijo and standard strong Wolfe conditions and extends the construction to every dimension $n\ge2$.
The detailed arguments are given in those sections.
\end{proof}

The Hessian bounds in Theorem~\ref{thm:main} give a global condition number of
at most three.  The threshold $c_2=2/3$ is exact for this construction: on the
two iterations of each cycle, the ratios
$|g_{k+1}^Ts_k|/|g_k^Ts_k|$ equal $1/3$ and $2/3$.  The restriction $c_1<2/3$
comes from the smaller of the two limiting normalized objective decreases in
the Armijo condition.  The main theorem does not require the standard
initialization $H_0=I$.  This raises a natural question: does the
nonconvergence depend on the use of a nonidentity initial matrix?
The affine normalization in the following corollary shows that it does not.

\begin{corollary}[Identity initialization]\label{cor:identity-initialization}
Fix any pair $(c_1,c_2)$ satisfying \eqref{eq:wolfe-range}.  There exist
$\widetilde f\in C^2(\R^2)$, $z_0\in\R^2$, positive step lengths $\alpha_k$,
and constants $0<\widetilde m\le \widetilde M<\infty$ such that
\[
\widetilde m I\preceq\nabla^2\widetilde f(z)
\preceq\widetilde M I\qquad(z\in\R^2).
\]
For this objective, classical DFP initialized with $\widetilde H_0=I$ is well
defined.  Every accepted step satisfies \eqref{eq:armijo} and
\eqref{eq:strong-curvature}, but
\[
\liminf_{k\to\infty}\|\nabla\widetilde f(z_k)\|>0.
\]
The same conclusion holds in every dimension $n\ge2$.
\end{corollary}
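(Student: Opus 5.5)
The plan is to deduce the corollary from Theorem~\ref{thm:main} by an affine change of variables that carries the theorem's $H_0$ to the identity. Let $f$, $x_0$, $H_0\succ0$ and $\alpha_k$ be as in Theorem~\ref{thm:main}. Write $H_0=LL^T$, for instance with $L=H_0^{1/2}$, and define
\[
\widetilde f(z)=f(x_0+Lz),\qquad z_0=0,\qquad \widetilde H_0=I,
\]
keeping the same step lengths $\alpha_k$. The central claim is that classical DFP on $\widetilde f$ from $(z_0,I)$ generates $z_k=L^{-1}(x_k-x_0)$ and $\widetilde H_k=L^{-1}H_kL^{-T}$. Then $x_k=x_0+Lz_k$ for every $k$.

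I will prove this correspondence by induction on $k$, using the invariance of the inverse DFP update under linear changes of variables.

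\textbf{Gradients and directions.} The gradients satisfy $\widetilde g_k=L^Tg_k$. Hence
\[
\widetilde d_k=-\widetilde H_k\widetilde g_k=-L^{-1}H_kL^{-T}L^Tg_k=L^{-1}d_k ,
\]
so $\widetilde s_k=L^{-1}s_k$ and $\widetilde y_k=L^Ty_k$.

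\textbf{Update terms.} Each term of the update transforms correctly:
\begin{itemize}
\item $\widetilde H_k\widetilde y_k=L^{-1}H_ky_k$;
\item $\widetilde y_k^T\widetilde H_k\widetilde y_k=y_k^TH_ky_k$;
\item $\widetilde s_k^T\widetilde y_k=s_k^Ty_k$.
\end{itemize}
Therefore $\widetilde H_{k+1}=L^{-1}H_{k+1}L^{-T}$. All denominators coincide with the original positive ones, so the transformed iteration is well defined.

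\textbf{Wolfe conditions.} The Armijo and strong curvature conditions are preserved exactly, because three quantities are unchanged:
\begin{itemize}
\item the function values, $\widetilde f(z_k)=f(x_k)$;
\item the directional derivatives, $\widetilde g_k^T\widetilde s_k=g_k^Ts_k$;
\item the next-step derivatives, $\widetilde g_{k+1}^T\widetilde s_k=g_{k+1}^Ts_k$.
\end{itemize}
The same constants $c_1,c_2$ therefore work.

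\textbf{Hessian bounds.} We have $\nabla^2\widetilde f(z)=L^T\nabla^2 f(x_0+Lz)L$, so
\[
\tfrac12\lambda_{\min}(H_0)I\preceq\nabla^2\widetilde f\preceq\tfrac32\lambda_{\max}(H_0)I .
\]
This gives $\widetilde m=\tfrac12\lambda_{\min}(H_0)$ and $\widetilde M=\tfrac32\lambda_{\max}(H_0)$. Also $\widetilde f\in C^2$, since it is a composition of $f$ with an affine map.

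\textbf{Nonvanishing gradients.} From $\|g_k\|=\|L^{-T}\widetilde g_k\|\le\|L^{-1}\|\,\|\widetilde g_k\|$ we obtain
\[
\|\widetilde g_k\|\ge \|g_k\|/\|L^{-1}\|\to G_\infty/\|L^{-1}\|>0 .
\]
This proves $\liminf_{k\to\infty}\|\nabla\widetilde f(z_k)\|>0$. The limit itself need not exist, since $L^T$ may distort the norms of the gradients as they rotate; this is why the corollary claims only a positive $\liminf$.

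\textbf{Higher dimensions.} For $n\ge2$ I would apply the same transformation to the $n$-dimensional example supplied by Theorem~\ref{thm:main}. Alternatively, take the orthogonal direct sum of $\widetilde f$ with $\tfrac12\|w\|^2$ in the extra coordinates, started at $w=0$. Then the gradient in $w$ stays zero, $\widetilde H_k$ remains block diagonal with identity block $I$, and the Wolfe quantities are unchanged.

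The only step needing real care is the invariance computation for the update. It is a routine check, and I expect no genuine obstacle.
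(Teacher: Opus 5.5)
Your proposal is correct and follows essentially the same route as the paper: the affine change of variables with $L=H_0^{1/2}$, the induction $\widetilde H_k=L^{-1}H_kL^{-T}$, exact preservation of the endpoint Wolfe quantities, the transformed Hessian bounds $\tfrac12\lambda_{\min}(H_0)I\preceq\nabla^2\widetilde f\preceq\tfrac32\lambda_{\max}(H_0)I$, and the gradient lower bound via $\|L^{-1}\|$. The translation by $x_0$ (so that $z_0=0$) and your optional direct sum after normalization are harmless variants of the paper's argument.
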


Corollary~\ref{cor:identity-initialization} gives identity initialization, but
its Hessian bounds depend on the problem.  It does not claim that $H_0=I$ and
the fixed Hessian bounds $[\frac12 I,\frac32 I]$ hold at the same time.  The
proof uses an affine change of variables and appears in
Section~\ref{sec:completion}, after the two-dimensional example is complete.

\section{Overview of the construction}
\label{sec:strategy}

We first define sequences $(x_k,g_k,H_k)$ that satisfy the DFP
search-direction, secant, and update identities.  We then construct a globally
$C^2$ uniformly convex function whose values and gradients agree with these
data.  Separating the two steps makes the DFP recurrence explicit, but it also
creates an interpolation problem: the prescribed gradient differences must be
the secant vectors of a single objective.

\subsection{One DFP update}

We begin with the DFP identities used to generate the algebraic sequence.  The
calculation is independent of dimension.  Let $n\ge1$, let
$H,A\in\R^{n\times n}$ be symmetric positive definite matrices, let
$g\in\R^n\setminus\{0\}$, and let $\tau>0$.  Set
\begin{equation}\label{eq:abstract-step}
\alpha=\tau\frac{g^THg}{(Hg)^TA(Hg)},\qquad
s=-\alpha Hg,\qquad y=As,\qquad g_+=g+y,
\end{equation}
and apply the inverse form of the DFP update to $(H,s,y)$.  With
\[
q=-g^Ts>0,\qquad t=s^Ty,
\]
the choice of $\alpha$ gives the identity
\begin{equation}\label{eq:line-ratio}
\frac tq=\tau.
\end{equation}
Here $q=\alpha g^THg$ and
$t=\alpha^2(Hg)^TA(Hg)=\tau\alpha g^THg$.

For the two-step construction, it is useful to eliminate the step length
$\alpha$ from the DFP formulas and keep only the secant matrix $A$ and the
ratio $\tau$.  The following proposition gives the resulting gradient and
matrix updates.  We will use them to combine the two iterations in each cycle.

\begin{proposition}[DFP update identities]\label{prop:one-step}
Let $v=Hg$, $\delta=g^THg$, $w=Av$, $\beta=v^TAv$, and
$\gamma=w^THw$.  The step in \eqref{eq:abstract-step} satisfies
\begin{align}
g_+&=g-\tau\frac{\delta}{\beta}w,
\label{eq:one-step-g}\\
H_+&=H-\frac{Hww^TH}{\gamma}+\frac{vv^T}{\beta}.
\label{eq:one-step-H}
\end{align}
In particular, the matrix update is independent of $\tau$, and every
denominator in \eqref{eq:one-step-g}--\eqref{eq:one-step-H} is positive.
\end{proposition}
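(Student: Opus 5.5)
The plan is direct substitution of the definitions in \eqref{eq:abstract-step} into the gradient update and the inverse DFP formula, followed by cancellation of the powers of $\alpha$. First we record that $v=Hg\ne0$, because $H\succ0$ and $g\ne0$. Hence $\delta=g^Tv=g^THg>0$ and $\beta=v^TAv>0$, since $A\succ0$. We also have $w=Av\ne0$, so $\gamma=w^THw>0$. These are exactly the denominators appearing in \eqref{eq:one-step-g}--\eqref{eq:one-step-H}, which settles the positivity claim. The step length itself is $\alpha=\tau\delta/\beta>0$.

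For the gradient identity, we write $s=-\alpha v$ and $y=As=-\alpha Av=-\alpha w$. Then
\[
g_+=g+y=g-\alpha w=g-\tau\frac{\delta}{\beta}w,
\]
which is \eqref{eq:one-step-g}.

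For the matrix identity, we compute each term of the inverse DFP update in terms of $v$ and $w$. The term $Hy=-\alpha Hw$ gives $Hyy^TH=\alpha^2Hww^TH$. Likewise $y^THy=\alpha^2w^THw=\alpha^2\gamma$. For the second rank-one term, $ss^T=\alpha^2vv^T$ and $s^Ty=\alpha^2v^TAv=\alpha^2\beta$. Substituting these into
\[
H_+=H-\frac{Hyy^TH}{y^THy}+\frac{ss^T}{s^Ty},
\]
the factors $\alpha^2$ cancel in both fractions, and we obtain \eqref{eq:one-step-H}. Since $v$, $w$, $\beta$, $\gamma$ depend only on $H$, $A$, $g$, the right-hand side contains no $\tau$. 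This proves that the matrix update is independent of $\tau$.

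The only point needing care is the nonvanishing of the denominators, so that the DFP update is well defined; positive definiteness of $H$ and $A$ together with $g\ne0$ gives this immediately. As a consistency check, \eqref{eq:line-ratio} follows because $q=\alpha\delta$ and $t=\alpha^2\beta=\tau\alpha\delta$. No earlier result beyond these definitions is needed.
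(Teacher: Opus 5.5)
Your proposal is correct and takes essentially the same approach as the paper. Both proofs write $s=-\alpha v$ and $y=-\alpha w$, cancel the common factor $\alpha^2$ in both rank-one DFP terms, read off $g_+=g+y$, and get positivity of the denominators from $H\succ0$, $A\succ0$, and $v\ne0$.
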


\begin{proof}
Equation \eqref{eq:abstract-step} gives $s=-\alpha v$ and
$y=-\alpha w$.  We have $s^Ty=\alpha^2\beta$ and
$y^THy=\alpha^2\gamma$; the factor $\alpha^2$ cancels in both rank-one DFP
terms.  This proves \eqref{eq:one-step-H}, while $g_+=g+y$ gives
\eqref{eq:one-step-g}.  Positivity follows from $H\succ0$, $A\succ0$, and
$v\ne0$.
\end{proof}

\subsection{Outline of the proof}

Before constructing the sequence, we introduce the notation used
throughout the proof and summarize its three main stages.  We use the
convention $e^\perp=(-e_2,e_1)^T$ for
$e=(e_1,e_2)^T$, so $(e,e^\perp)$ is a positively oriented orthonormal
basis.  Vector norms are Euclidean, and matrix norms are spectral norms.  With
this convention, five quantities describe the two-step sequence.  The table summarizes their
roles and asymptotic behavior.
\begin{center}
\small
\begin{tabular}{@{}lll@{}}
\toprule
quantity & meaning & behavior along cycle $j$\\
\midrule
$\epsilon_j$ & small parameter & $\asymp j^{-1/3}$\\
$r_j=\epsilon_j^2$ & order of step lengths and rotation angles & $\asymp j^{-2/3}$\\
$G_j$ & gradient component $e_j^Tg_{2j}$ & converges to a positive limit\\
$\phi_j$ & angle for the smaller eigenvalue
  & $\sum_j|\phi_{j+1}-\phi_j|=\infty$\\
$C_k=x_k-g_k$ & minimizer of the reference quadratic & converges in $\R^2$\\
\bottomrule
\end{tabular}
\end{center}
The smaller eigenvalue of $H_{2j}$ is of order $\epsilon_j^4$.  A pair of DFP
iterations changes $G_j$ by order $\epsilon_j^4$ but rotates the orthonormal
eigenbasis of $H_{2j}$ by order $\epsilon_j^2$.  The changes in $G_j$ are
summable.  The rotation angles are not.

The proof has three parts.  Section~\ref{sec:alternating-sequence} chooses two
positive definite matrices that define the secant pairs and uses
Proposition~\ref{prop:one-step} to generate an alternating
two-step DFP sequence.  The center manifold theorem gives an infinite
sequence on which $G_j$ has a positive limit and the iterates approach a
circle.  Section~\ref{sec:global-objective} proves a uniform separation
estimate and interpolates the prescribed values and gradients by
smooth functions with pairwise disjoint supports.  The matrices used to define
the secant pairs need not be Hessians of the final objective; after
interpolation, the actual gradient differences are exactly the prescribed
secants.  Section~\ref{sec:completion} verifies the Armijo and strong Wolfe
conditions, proves nonconvergence, and gives the identity-initialized and
higher-dimensional versions.

\section{Construction of a nonconvergent DFP sequence}
\label{sec:alternating-sequence}

We now construct the sequences $(x_k,g_k,H_k)$ before defining the objective.
Unlike Proposition~\ref{prop:one-step}, which is valid in any dimension, the
construction in this and the next section is entirely two-dimensional.  All
vectors belong to $\R^2$ and all matrices are $2\times2$ until the direct-sum
extension in Section~\ref{sec:completion}.  The sequences satisfy the
search-direction, secant, and update equations of classical DFP.  The analysis
below shows that $\|g_k\|$ remains bounded away from zero.
Section~\ref{sec:global-objective} constructs one uniformly convex function on
$\R^2$ that generates these data.  Cycle $j$ consists of DFP iterations
$2j$ and $2j+1$.

\subsection{Coordinate representation and choice of secant pairs}

We first choose coordinates that isolate the smaller eigenvalue of $H$ and
the component of $g$ along its eigenvector.  These coordinates put the two
prescribed secant pairs in a common form and allow us to analyze their
combined effect over one cycle.  At the beginning of a two-step cycle, let
$R=(e,e^\perp)$ be the oriented
orthogonal matrix whose first column is a unit eigenvector corresponding to
the smaller eigenvalue of $H$, and write
\begin{equation}\label{eq:coordinate-representation}
H=R\begin{pmatrix}hpr^2&0\\0&h\end{pmatrix}R^T,
\qquad
g=GR\binom{1}{pr},
\end{equation}
where $r,p,h,G>0$.  Here $r$ determines the smaller eigenvalue and the second
gradient component.  The scalar $G$ is the component of $g$ along the
corresponding eigenvector.  The pair $(p,h)$ describes the remaining normalized
components of $H$ and $g$.

We use the following data for the two DFP iterations in each cycle.  The ratio
$s^Ty/(-g^Ts)$ determines the directional derivative after an accepted step.
The off-diagonal terms have opposite signs, and this rotates the eigenvectors
of the inverse Hessian approximation.  At $\epsilon=0$, the first iteration
doubles $r$ and changes $p$ from $2$ to $1/2$.  The second restores both
quantities.  The terms of order $\epsilon$ produce the net decrease in $r$
that we study below.  Set $r=\epsilon^2$.  In the current oriented eigenbasis,
use the following matrices and parameters during one cycle:
\begin{equation}\label{eq:alternating-secant-data}
A_1(\epsilon)=
\begin{pmatrix}1&\epsilon\\ \epsilon&1\end{pmatrix},
\quad \tau_1=\frac23;
\qquad
A_2(\epsilon)=
\begin{pmatrix}1&-2\epsilon\\ -2\epsilon&1\end{pmatrix},
\quad \tau_2=\frac13.
\end{equation}
We restrict the initial parameter to
$0<\epsilon\le\epsilon_0<1/4$.  Both matrices are then positive definite, and
their spectra lie in $[1/2,3/2]$.  The same $\epsilon$ is used in both
iterations.  After each iteration, we diagonalize the updated inverse Hessian
approximation.  We choose the sign of the eigenvector for the smaller
eigenvalue so that its inner product with the new gradient is positive.  We
then use the corresponding coordinates in
\eqref{eq:coordinate-representation}.

For direct verification of the two iterations, we now record the current
gradient, the Hessian approximation, and the step length at each index.  This
also fixes the relation between the inverse-Hessian notation used in the
algorithm and the Hessian notation common in quasi-Newton analysis.  Define
\[
B_k:=H_k^{-1}.
\]
At the beginning of cycle $j$, write the quantities in
\eqref{eq:coordinate-representation} as
$R_j,G_j,p_j,h_j,r_j$, where $r_j=\epsilon_j^2$, and set
\[
\widehat g_j=\binom{1}{p_jr_j},\qquad
\widehat w_j=\binom{r_j+\epsilon_j}{1+\epsilon_jr_j},\qquad
D_j^{(1)}=1+2\epsilon_jr_j+r_j^2.
\]
The data at the even iteration are
\begin{equation}\label{eq:even-iteration-data}
\begin{aligned}
g_{2j}&=G_jR_j\widehat g_j,\\
B_{2j}&=R_j
\begin{pmatrix}
(h_jp_jr_j^2)^{-1}&0\\[1mm]
0&h_j^{-1}
\end{pmatrix}R_j^T,\\
\alpha_{2j}
&=\frac{2(p_j+1)}{3h_jp_jD_j^{(1)}}.
\end{aligned}
\end{equation}
The first update therefore gives, still in the basis $R_j$,
\begin{align}
g_{2j+1}
&=G_jR_j\left(
\widehat g_j-
\frac{2(p_j+1)r_j}{3D_j^{(1)}}\widehat w_j
\right),
\label{eq:first-updated-gradient}\\
B_{2j+1}
&=R_j\left\{
\frac1{h_j}
\begin{pmatrix}(p_jr_j^2)^{-1}&0\\0&1\end{pmatrix}
-\frac{\widehat w_j\widehat g_j^T+
\widehat g_j\widehat w_j^T}
{h_jp_jr_jD_j^{(1)}}
+\left\{\frac1{D_j^{(1)}}
+\frac{p_j+1}{h_jp_j(D_j^{(1)})^2}\right\}
\widehat w_j\widehat w_j^T
\right\}R_j^T.
\label{eq:first-updated-B}
\end{align}

To state the odd-iteration step length without expanding the two eigenvalues
into a long expression, diagonalize
$H_{2j+1}=B_{2j+1}^{-1}$ as prescribed above.  Denote the resulting oriented
eigenbasis and normalized coordinates by
$R_j^{(1)},G_j^{(1)},p_j^{(1)},h_j^{(1)},r_j^{(1)}$; thus
\begin{equation}\label{eq:odd-iteration-data}
\begin{aligned}
g_{2j+1}
&=G_j^{(1)}R_j^{(1)}
\binom{1}{p_j^{(1)}r_j^{(1)}},\\
B_{2j+1}
&=R_j^{(1)}
\begin{pmatrix}
\bigl(h_j^{(1)}p_j^{(1)}(r_j^{(1)})^2\bigr)^{-1}&0\\[1mm]
0&(h_j^{(1)})^{-1}
\end{pmatrix}(R_j^{(1)})^T,\\
\alpha_{2j+1}
&=\frac{p_j^{(1)}+1}
{3h_j^{(1)}p_j^{(1)}D_j^{(2)}},
\qquad
D_j^{(2)}=1-4\epsilon_jr_j^{(1)}+(r_j^{(1)})^2.
\end{aligned}
\end{equation}
Here the secant matrices in the original coordinates are
$R_jA_1(\epsilon_j)R_j^T$ and
$R_j^{(1)}A_2(\epsilon_j)(R_j^{(1)})^T$, respectively.  All formulas in
\eqref{eq:even-iteration-data}--\eqref{eq:odd-iteration-data} are exact.  They
follow by applying the entrywise update formulas
\eqref{eq:appendix-H-entries} and \eqref{eq:appendix-B-update}, followed by
the parameter-recovery formulas \eqref{eq:parameter-recovery}; see
Appendix~\ref{app:algebra}.  Thus the odd-iteration quantities are determined
explicitly by the even-iteration data, although writing the two eigenvalues
out in full would obscure the two-step recurrence.

\subsection{A local invariant graph}

The two-step recurrence is singular at $\epsilon=0$ in the original
variables.  After the explicit powers of $\epsilon$ are cancelled, its
derivative has one eigenvalue equal to one and two eigenvalues with modulus
less than one.  We use the following standard invariant-graph result for a
discrete system.  In the application, the scalar center variable is
$u=\epsilon$, and the stable variable is
$z=(p-2,h-1)\in\R^2$.  The auxiliary system has one center variable and
two stable variables, even though the optimization problem itself is
two-dimensional.  We state the result for a general stable dimension $d_s$;
it also applies to a noninvertible map \cite{KarydasSchinas1992}.

\begin{lemma}[Local invariant graph]\label{lem:graph-transform}
Let $\nu\ge2$, let $d_s\ge1$ be an integer, and let $F=(F_c,F_s)$ be a
$C^\nu$ map from a neighborhood of $(0,0)\in\R\times\R^{d_s}$ into
$\R\times\R^{d_s}$.  Suppose that, for some
$L\in\R^{d_s\times d_s}$,
\[
F(0,0)=(0,0),\qquad
DF(0,0)=\begin{pmatrix}1&0\\0&L\end{pmatrix},
\qquad \rho(L)<1.
\]
Then, after restricting the neighborhood, there is a $C^\nu$ function
$z=\zeta(u)$ with $\zeta(0)=D\zeta(0)=0$ whose graph is locally forward
invariant.  The conclusion does not require $L$, or the full derivative
$DF(0,0)$, to be invertible.
\end{lemma}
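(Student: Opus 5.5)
The plan is to prove Lemma~\ref{lem:graph-transform} by a graph transform (Hadamard) argument applied to a globalized version of $F$. This is the standard route to center manifolds for maps, and it never uses invertibility of $F$.

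\emph{Step 1: norms and globalization.} Since $\rho(L)<1$, choose a norm $|\cdot|_s$ on $\R^{d_s}$ adapted to $L$, so that $\|L\|_s\le\lambda<1$. Let $\chi$ be a smooth cutoff equal to $1$ on $[-1,1]$ and supported in $[-2,2]$. For small $\eta>0$ define
\[
\widetilde F(u,z)=DF(0,0)(u,z)+\chi(|u|/\eta)\chi(|z|_s/\eta)\,N(u,z),
\]
where $N=F-DF(0,0)$. Since $N(0,0)=0$ and $DN(0,0)=0$, the map $\widetilde F$ is $C^\nu$ (after smoothing the norm, or using the Euclidean norm in the cutoff), it agrees with $F$ near the origin, and its Lipschitz constant satisfies $\operatorname{Lip}(\widetilde F-DF(0,0))\le\kappa(\eta)$ with $\kappa(\eta)\to0$.

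\emph{Step 2: graph transform.} Consider the space $X$ of functions $\zeta:\R\to\R^{d_s}$ with $\zeta(0)=0$ and $\operatorname{Lip}\zeta\le1$. For $\zeta\in X$, the map $u\mapsto \widetilde F_c(u,\zeta(u))=u+O(\kappa)\cdot$ is a bi-Lipschitz homeomorphism of $\R$, with Lipschitz constant of its perturbation at most $2\kappa<1$. Let $\psi$ denote its inverse. Define
\[
(\Gamma\zeta)(v)=\widetilde F_s(\psi(v),\zeta(\psi(v))).
\]
This is exactly the graph-image map: $\widetilde F(\operatorname{graph}\zeta)=\operatorname{graph}\Gamma\zeta$. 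Because the center direction has multiplier $1$ while $|L|\le\lambda<1$, the routine cone estimates give:
\begin{itemize}
\item $\operatorname{Lip}\Gamma\zeta\le(\lambda+2\kappa)/(1-2\kappa)\le1$;
\item $\Gamma$ is a contraction in the sup-weighted metric $\sup_{u\ne0}|\zeta(u)|_s/|u|$, with contraction factor $(\lambda+2\kappa)/(1-2\kappa)<1$.
\end{itemize}
The fixed point $\zeta^*$ has an invariant graph. Its forward invariance only requires that $\widetilde F$ maps the graph into itself, which holds by construction; no inverse of $F$ is used. This is where noninvertibility of $L$ is harmless, since only the center component needs to be inverted.

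\emph{Step 3: regularity.} This step, which I expect to be the main obstacle, shows that $\zeta^*$ is $C^\nu$ with $\zeta^*(0)=D\zeta^*(0)=0$. For $C^1$ regularity I would use the fiber contraction theorem. Apply it to the pair $(\zeta,\sigma)$, where $\sigma(u)\in\R^{d_s\times1}$ is a candidate derivative transformed by the linearized graph transform. The fiber map contracts with factor roughly $(\lambda+\kappa)/(1-\kappa)$, and the required spectral gap condition is $\lambda<1^{k}$, which holds for every $k\le\nu$ because the center rate is exactly $1$ up to $\kappa$. Iterating the fiber contraction for jets up to order $\nu$ gives $C^\nu$ regularity, after shrinking $\eta$ depending on $\nu$. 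This is where the finite smoothness restriction $\nu<\infty$ enters. Since $D\widetilde F(0,0)$ leaves the $u$-axis invariant, the tangent $D\zeta^*(0)$ satisfies the fixed point equation $\sigma=L\sigma$, which forces $D\zeta^*(0)=0$. Finally, restrict to $|u|\le\eta$, where $\widetilde F=F$. Points of the graph with small $|u|$ map to points that remain in the neighborhood, and there $F=\widetilde F$, which gives local forward invariance. Alternatively, one could simply cite \cite{KarydasSchinas1992} for this step.
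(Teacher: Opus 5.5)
Your proposal is correct and follows essentially the same route as the paper. Both arguments cut off the nonlinear part and apply a Hadamard graph transform that inverts only the scalar center component. Both get contraction from $\lambda<1$, get $C^\nu$ regularity by a derivative-level contraction under the bunching condition $(\lambda+O(\kappa))(1-O(\kappa))^{-k}<1$ (the paper's \eqref{eq:graph-bunching}), and conclude $D\zeta(0)=0$ from $D\zeta(0)=LD\zeta(0)$. You globalize on all of $\R\times\R^{d_s}$ with a weighted metric and use the fiber contraction theorem, where the paper uses the cylinder $[-a,a]\times\overline{\mathbb B}_b$ with the uniform norm and a stage-by-stage $C^r$ contraction; these are technical variants of the same argument.
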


\begin{proof}
Choose an equivalent norm in the $z$-space for which
$\norm{L}<\lambda<1$.  Write $\mathbb B_b$ and
$\overline{\mathbb B}_b$ for the open and closed balls of radius $b$ centered
at the origin in this norm, and fix a small Lipschitz bound
$q_{\rm lip}>0$.
Multiply the nonlinear part of $F$ by a smooth cutoff.  The cutoff equals one
near the origin and vanishes near the boundary of
$[-a,a]\times\overline{\mathbb B}_b$.  At $u=\pm a$, the modified
$u$-component is the identity.  The linear map $L$ sends
$\overline{\mathbb B}_b$ strictly inside itself.  By
reducing $a$ and $b$, we can make the modified map send the cylinder into
itself and satisfy the following bounds for any fixed small $\delta>0$:
\begin{align*}
|D_uF_c-1|+\norm{D_zF_c}&\le\delta,\\
\norm{D_uF_s}+\norm{D_zF_s-L}&\le\delta.
\end{align*}
Let $\mathcal X$ be the complete metric space of graphs
$\zeta:[-a,a]\to\overline{\mathbb B}_b$ with $\zeta(0)=0$ and
$\operatorname{Lip}(\zeta)\le q_{\rm lip}$.  We use the uniform norm.  The
boundary choice ensures that the $u$-component of each graph maps $[-a,a]$
onto itself.  Its image also remains in the cylinder.

For $\zeta\in\mathcal X$, set
$\Theta_\zeta(u)=F_c(u,\zeta(u))$.  Its difference quotients lie between
$1-\delta(1+q_{\rm lip})$ and $1+\delta(1+q_{\rm lip})$.  The same bounds hold
for its derivative wherever that derivative exists.  Since
$\Theta_\zeta(\pm a)=\pm a$, this map is a strictly increasing bijection of
$[-a,a]$.  Its inverse is Lipschitz continuous.
Define the graph transform by
\[
(\mathcal T\zeta)(\bar u)
=F_s(u,\zeta(u)),
\qquad u=\Theta_\zeta^{-1}(\bar u).
\]
The derivative bounds show that $\mathcal T\mathcal X\subset\mathcal X$.  Its
uniform contraction factor is $\lambda+O(\delta)<1$.  The map $\mathcal T$
has a unique fixed graph on the cutoff cylinder.  On the smaller
region where the cutoff equals one, that graph is forward invariant for the
original map $F$.

We next prove the stated regularity.  For each
$1\le r\le\nu$, shrink the cylinder until
\begin{equation}\label{eq:graph-bunching}
(\lambda+O(\delta))
\{1-\delta(1+q_{\rm lip})\}^{-r}<1.
\end{equation}
At stage $r$, consider the closed set of $C^r$ graphs whose derivatives through
order $r-1$ satisfy the bounds from the earlier stages.  Use a norm that also
controls the $r$th derivative.  The chain rule shows that the transformed
$r$th derivative is affine in $D^r\zeta$.  Its linear coefficient is bounded
by the left-hand side of \eqref{eq:graph-bunching}; all other terms use only
lower derivatives.  Condition \eqref{eq:graph-bunching} makes the transform a
contraction at stage $r$.  Induction from $r=1$ to $r=\nu$ gives a $C^\nu$
fixed graph.  Uniqueness in the uniform norm identifies it with the Lipschitz
graph constructed above.  Differentiating the invariance equation at the
origin gives $D\zeta(0)=L D\zeta(0)$.  We have $D\zeta(0)=0$ because
$1\notin\sigma(L)$.  This is the standard graph-transform proof of the
discrete center manifold theorem in $C^\nu$.  Only the scalar map
$\Theta_\zeta$ is inverted.  Neither $L$ nor $DF(0,0)$ must be invertible, so
a zero eigenvalue of $L$ causes no difficulty.
\end{proof}

\subsection{Analytic extension and an invariant center manifold}

Lemma~\ref{lem:graph-transform} can be applied only after we remove the
singularity at $\epsilon=0$ and find the spectrum of the linearized recurrence.
Let
$\mathcal F(\epsilon,p,h)=(\epsilon_+,p_+,h_+)$ denote the map obtained by
applying the two updates in \eqref{eq:alternating-secant-data}.  We then write
the updated state in the form \eqref{eq:coordinate-representation}.  The next
lemma verifies the two requirements for $\mathcal F$.  It also gives the
leading terms of the map on the center manifold.  These terms determine the
slow change of the small eigenvalue.

\begin{lemma}[Rescaled recurrence and invariant center manifold]
\label{lem:center-manifold}
After cancellation of the explicit singular factors, $\mathcal F$ has a real
analytic extension near $(0,2,1)$.  Its derivative there has one eigenvalue
equal to $1$ and two other eigenvalues
$-1/9$ and $0$, so it has a
locally invariant one-dimensional $C^7$ center manifold
\[
p=p(\epsilon),\qquad h=h(\epsilon),
\]
on which
\begin{align}
p(\epsilon)&=2+\frac{198}{5}\epsilon^3
-\frac95\epsilon^4+O(\epsilon^5),
&h(\epsilon)&=1+8\epsilon^3+O(\epsilon^5),
\label{eq:center-curve}\\
\epsilon_+&=\epsilon-\frac32\epsilon^4
+\frac54\epsilon^5+O(\epsilon^6).
\label{eq:epsilon-map}
\end{align}
\end{lemma}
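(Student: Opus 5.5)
We treat the lemma as a computation on the explicit two-step map, organized so that Lemma~\ref{lem:graph-transform} applies directly.

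\textbf{Step 1: a regular one-step map in rescaled variables.} We write one DFP iteration in the normalized coordinates \eqref{eq:coordinate-representation}. Given $(r,p,h,G)$, a secant matrix $A$ and a ratio $\tau$, Proposition~\ref{prop:one-step} gives $g_+$ and $H_+$ explicitly. Set $v=Hg=hG(pr^2,\,pr)^T$, which equals $hGpr\,(r,1)^T$. Hence $v$ is $O(r)$ times a regular vector, and $\beta$, $\gamma$, $\delta$ all carry explicit powers of $r$ and $p$. Dividing these out gives \eqref{eq:first-updated-gradient}--\eqref{eq:first-updated-B} with only regular denominators such as $D^{(1)}$.

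\textbf{Step 2: recovering the new coordinates.} We must read off the new coordinates $(r_+,p_+,h_+,R_+,G_+)$ from $H_+$ and $g_+$. The determinant and trace of $H_+$ give $h_+$ and $h_+p_+r_+^2$. The rotation angle of the small eigenvector is $O(r)$ and is obtained from an off-diagonal entry divided by the eigenvalue gap, which is $h+O(r^2)$ and hence regular. The second gradient component, divided by $r_+$, gives $p_+$.

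The key point is that $r_+/r$ extends analytically to $\epsilon=0$. The large eigenvalue is $h_+=O(1)$. The small eigenvalue is $\det H_+/h_+$, where
\[
\det H_+=\det H\cdot\frac{\delta\text{-type factors}}{\ldots}
\]
follows from the standard DFP determinant formula $\det H_+=\det H\cdot\dfrac{s^Ty}{y^THy}$. Here $y^THy$ is $O(1)$ times regular, so $\det H_+=O(r^2)\times$ regular, and $r_+^2/r^2$ is analytic with a nonzero value at $\epsilon=0$. We take the positive square root. The new $\epsilon_+=\sqrt{r_{++}}$ after the second step is then $\epsilon$ times an analytic function of $(\epsilon,p,h)$; this holds because $\sqrt{r_{++}/r}=\sqrt{r_{++}}/\epsilon$ is analytic when the ratio is positive at $\epsilon=0$. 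The sign convention for the eigenvector only fixes $R_+$ and does not affect analyticity.

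\textbf{Step 3: composition and the linearization.} We compose the two steps. Both use the same $\epsilon$, but the second step's off-diagonal entry $-2\epsilon$ is expressed in the new basis, which is exactly what \eqref{eq:alternating-secant-data} prescribes. At $\epsilon=0$ the map decouples: $A=I$, and the iteration acts on $(p,h)$ alone. We verify the stated fixed point, where $p$ goes $2\to1/2\to2$, $h$ returns to $1$, and $r$ doubles and then halves, so $\epsilon_+=\epsilon$.

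The Jacobian is block triangular. Its $\epsilon$-row is $(1,0,0)$ at $\epsilon=0$, because $\epsilon_+=\epsilon\,\Phi(\epsilon,p,h)$ with $\Phi(0,2,1)=1$, so its partial derivatives in $p$ and $h$ vanish at $\epsilon=0$. The $(p,h)$ block is computed from the $\epsilon=0$ recurrence. With $A=I$ the DFP update in the $2\times2$ diagonal setting is elementary, and we expect eigenvalues $-1/9$ and $0$. The $0$ should come from $h$: with $A=I$, $h_+$ depends only on $p$ after the step. Lemma~\ref{lem:graph-transform} with $\nu=7$ (analytic implies $C^7$), $u=\epsilon$, and $z=(p-2,h-1)$ then yields the invariant graph, after a linear change of variables that removes the off-diagonal $\partial(p,h)/\partial\epsilon$ coupling.

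\textbf{Step 4: the expansions.} We insert the ansatz $p=2+\sum a_k\epsilon^k$, $h=1+\sum b_k\epsilon^k$ into the invariance equations $p(\epsilon_+)=p_+(\epsilon,p(\epsilon),h(\epsilon))$ and similarly for $h$. We then solve order by order up to $\epsilon^4$ for $p$ and $\epsilon^5$ for $h$. Solvability at each order holds because $1-\lambda\ne0$ for $\lambda\in\{-1/9,0\}$. Substituting back gives \eqref{eq:epsilon-map}.

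The main obstacle is this Taylor expansion of the composed map to fifth order in $\epsilon$, since the lower-order terms must all cancel. That $a_1=a_2=b_1=b_2=0$ and that the first nontrivial $\epsilon_+$ correction is at order $\epsilon^4$ both reflect cancellations. We would carry the expansion symbolically, using the exact formulas of Appendix~\ref{app:algebra}, and cross-check it numerically by iterating $\mathcal F$ at small $\epsilon$.
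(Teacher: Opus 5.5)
Your overall route is the paper's: rescale away the powers of $\epsilon$, linearize through the $\epsilon=0$ recurrence, apply Lemma~\ref{lem:graph-transform} with $\nu=7$, and solve the invariance equation order by order. The gap is in Step~2, which is where the analytic extension is supposed to be established. The determinant identity only shows that the small eigenvalue $\lambda_-^+=h_+p_+r_+^2$ is $r^2$ times a regular nonzero factor. That controls $p_+r_+^2/r^2$, not $r_+^2/r^2$. Your recipe for $p_+$ (second gradient component divided by $r_+$) already presupposes $r_+$, so the argument is circular.

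The missing ingredient is $\gamma_+$, the component of $g_+$ along the new large-eigenvalue direction. By \eqref{eq:parameter-recovery}, $r_+=\lambda_-^+\gamma_-/(\lambda_+^+\gamma_+)$ and $p_+=\lambda_+^+\gamma_+^2/(\lambda_-^+\gamma_-^2)$. So you must show that $\gamma_+=Gr\,U$ with $U$ analytic and positive at $\epsilon=0$; these are the paper's table entries $U_1=1$, $U_2=2$. Positivity is also what keeps $r_+,p_+>0$, so that the representation \eqref{eq:coordinate-representation} persists.

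This is not automatic, because there is a leading-order cancellation exactly at the chosen data. In the old basis, $(g_+)_2=Gr\{p-\tau(p+1)\}+o(Gr)$, and this vanishes at both $(p,\tau)=(2,\tfrac23)$ and $(\tfrac12,\tfrac13)$. Hence $\gamma_+$ comes entirely from the $O(r)$ rotation of the eigenbasis, which you treat as merely fixing $R_+$ and as irrelevant to analyticity.

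Concretely, to leading order $H_+\approx\begin{pmatrix}(hp+1)r^2&r\\ r&1\end{pmatrix}$, the small eigenvector is $\approx(1,-r)^T$, and so $\gamma_+\approx Gr(p+1)(1-\tau)>0$. This gives, at $\epsilon=0$, $r_+/r=hp/\{(p+1)(1-\tau)\}$, $p_+=(p+1)^2(1-\tau)^2/(hp)$, and $h_+=1$. Composing with $\tau=\tfrac23,\tfrac13$ yields \eqref{eq:limiting-radius}--\eqref{eq:limiting-ph} and the Jacobian \eqref{eq:limiting-jacobian}.

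The same computation corrects your heuristic for the zero eigenvalue. It comes from $h_+\equiv1$ at $\epsilon=0$, that is, from independence of both $p$ and $h$. If $h_+$ merely depended only on $p$, the block would be $\begin{pmatrix}-1/9&2/3\\ c&0\end{pmatrix}$ with determinant $-2c/3$, and there would be no zero eigenvalue unless $c=0$.

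With these facts in place, your Step~4 coincides with the paper's coefficient computation. The paper organizes it through an auxiliary expansion in independent variables $(b,r)$ with $p=2+Pbr$ and $h=1+Jbr$. It also uses the absence of a pure $\epsilon^2$ term in $z_+$ to force $z_2=0$.
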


\begin{proof}
All operations in one DFP iteration are rational in the matrix entries and the
secant pair.  At the limiting point in the rescaled variables, every remaining
denominator is positive after the explicit powers of $\epsilon$ are cancelled.
The two eigenvalues of the limiting matrix are $0$ and $1$.  Its oriented
eigenbasis depends analytically on the entries.  Direct cancellation
gives the stated analytic extension.  If $\widehat r_+$ denotes the value of
$r$ after two
iterations, then $\widehat r_+/\epsilon^2$ has a positive analytic extension
with value one.  We define
\[
\epsilon_+=\epsilon
\sqrt{\widehat r_+/\epsilon^2}.
\]
For $\epsilon>0$, this agrees with the positive square root of
$\widehat r_+$.  The factor $\epsilon$ makes the three-variable recurrence
analytic across $\epsilon=0$.

To verify the cancellation explicitly, write the state after iteration
$i\in\{1,2\}$ in its new oriented eigenbasis as
\[
\lambda_-^{(i)}=\epsilon^4L_i,\qquad
\lambda_+^{(i)}=\mathcal H_i,\qquad
R_i^Tg_i
=G\binom{\mathcal G_i}{\epsilon^2U_i},\qquad
r_i=\epsilon^2\mathcal R_i.
\]
Direct substitution in \eqref{eq:abstract-step} and the DFP determinant
identity
\[
\det H_+=\det H\,\frac{s^Ty}{y^THy}
\]
show that all displayed factors are analytic.  Their values at
$\epsilon=0$ are
\[
\begin{array}{c|cccccc}
i&L_i&\mathcal H_i&\mathcal G_i&U_i&\mathcal R_i&p_i\\ \hline
1&2&1&1&1&2&1/2\\
2&2&1&1&2&1&2.
\end{array}
\]
Indeed,
\[
r_i=\frac{\lambda_-^{(i)}}
{\lambda_+^{(i)}
 ((R_i^Tg_i)_2/(R_i^Tg_i)_1)}
=\epsilon^2\frac{L_i\mathcal G_i}{\mathcal H_iU_i}.
\]
The four expressions used in each iteration are
\[
g^THg,\qquad (Hg)^TA(Hg),\qquad s^TAs,\qquad (As)^TH(As).
\]
Each equals $\epsilon^4$ times a factor that is analytic and positive at
$\epsilon=0$.
For the first iteration these four limiting factors are $(6,4,4,4)$; for the
second they are $(3,1,1,1)$.  For example, before the first update,
\begin{align*}
g^THg&=hp(p+1)\epsilon^4,\\
(Hg)^TA_1(Hg)&=h^2p^2\epsilon^4
 (1+2\epsilon^3+\epsilon^4).
\end{align*}
It follows that every step length, DFP denominator, spectral coordinate, and
quotient in \eqref{eq:coordinate-representation} has an analytic extension
across $\epsilon=0$.  Also,
$\widehat r_+/\epsilon^2=\mathcal R_2\to1$, as required in the definition of
$\epsilon_+$ above.

The cancellation also gives explicit formulas for the limiting recurrence.
If $p$ and $h$ are left independent while $\epsilon\to0$, then
\begin{align}
\left.\frac{r_+}{\epsilon^2}\right|_{\epsilon=0}
&=\frac{9hp(p+1)}{2\{9hp+(p+1)^2\}},
\label{eq:limiting-radius}\\
\left.p_+\right|_{\epsilon=0}
&=\frac{4\{9hp+(p+1)^2\}^2}
{81hp(p+1)^2},
&\left.h_+\right|_{\epsilon=0}&=1.
\label{eq:limiting-ph}
\end{align}
At $(p,h)=(2,1)$, the factor $r_+/\epsilon^2$ equals one.  The limiting
recurrence also fixes $(2,1)$.  Differentiating
\eqref{eq:limiting-radius}--\eqref{eq:limiting-ph}
gives
\begin{equation}\label{eq:limiting-jacobian}
D(\epsilon_+,p_+,h_+)(0,2,1)
=\begin{pmatrix}
1&0&0\\
0&-1/9&2/3\\
0&0&0
\end{pmatrix}.
\end{equation}
The derivative has the stated three eigenvalues.

To determine the required coefficients, treat $r$ and $b$ as independent
small variables and set
\[
p=2+Pb r,\qquad h=1+Jb r.
\]
Substitute this form into the two DFP updates and use the quadratic formula for
the two simple eigenvalues.  This gives the following two-variable expansions.
Here $C=x-g$, and $e$ is a unit eigenvector for the smaller eigenvalue in the
original coordinates.  The scalar $\phi\in\R$ is its angle, chosen continuously
along the sequence:
\begin{align*}
r_+-r
 &=\frac{b(6J+5P-300)}{18}r^2+O(r^3),\\
p_+-2
 &=\frac{b(6J-P+348)}9r+O(r^2),\\
h_+-1&=8br+O(r^2),\\
\frac{G_+}{G}-1
 &=\frac{b^2(24J-4P+384)-117}{18}r^2+O(r^3),\\
\phi_+-\phi&=-3r+O(r^2),\\
e^T(C_+-C)
 &=-\frac{2b^2(6J-P+96)}9Gr^2+O(G|b|r^3).
\end{align*}
The remainders are uniform for bounded $(b,P,J)$ in a fixed neighborhood.
The leading recurrence for $(P,J)$ is
\begin{equation}\label{eq:transverse-map}
P_+=\frac{6J-P+348}{9}+o(1),
\qquad J_+=8+o(1).
\end{equation}
Its linear part is
\[
L=\begin{pmatrix}-1/9&2/3\\0&0\end{pmatrix},
\]
and its fixed point is
\[
(P,J)=\left(\frac{198}{5},8\right).
\]
Lemma~\ref{lem:graph-transform} applies with $\nu=7$.  One eigenvalue is $1$,
and the other two, $-1/9$ and $0$, lie strictly inside the unit disk.  The
analytic extension gives all required derivatives.  The zero eigenvalue
belongs to the matrix $L$, so the lemma gives a $C^7$ invariant graph.
Because the graph is tangent to the $\epsilon$-axis, we initially know only
$(p-2,h-1)=O(\epsilon^2)$, so the absence of a quadratic term must be checked.
Writing $z=(p-2,h-1)^T$, the exact expansion in
Appendix~\ref{app:algebra} has the form
\[
z_+=Lz+O(\epsilon^3+|\epsilon|\norm{z}+\norm{z}^2),
\qquad \epsilon_+=\epsilon+O(\epsilon^4+|\epsilon|\norm{z}).
\]
If the invariant graph begins with $z=z_2\epsilon^2+O(\epsilon^3)$, its
order-$\epsilon^2$ invariance equation is $z_2=Lz_2$.  Since
$1\notin\sigma(L)$, we have $z_2=0$.  We may write
\[
p=2+P_3\epsilon^3+P_4\epsilon^4+O(\epsilon^5),
\qquad
h=1+H_3\epsilon^3+H_4\epsilon^4+O(\epsilon^5).
\]
The $p$- and $h$-components of the graph-invariance equation give
\begin{equation}\label{eq:graph-coefficient-system}
3H_3-5P_3+174=0,\qquad 8-H_3=0,\qquad
3H_4-5P_4-9=0,\qquad H_4=0.
\end{equation}
The solution is
\[
(P_3,H_3,P_4,H_4)=\left(\frac{198}{5},8,-\frac95,0\right).
\]
For the $\epsilon$-component, the next two coefficients of
$\epsilon_+-\epsilon$ are
\[
\frac{6H_3+5P_3-300}{36}=-\frac32,
\qquad
\frac{6H_4+5P_4+54}{36}=\frac54.
\]
This proves \eqref{eq:center-curve}--\eqref{eq:epsilon-map}.  The leading
relation also follows from the displayed two-variable expansion:
$r_+=r-3\epsilon r^2+O(\epsilon^6)$ when
$b=\epsilon$ and $r=\epsilon^2$.
Equation \eqref{eq:epsilon-map} also gives
$0<\epsilon_+<\epsilon$ for every sufficiently small $\epsilon>0$.
Since $p(0)=2$, $h(0)=1$, and $p(\epsilon)$ and $h(\epsilon)$ are continuous,
all forward iterates satisfy $p,h,\epsilon>0$.
\end{proof}

\subsection{Asymptotic expansions in the original coordinates}

Lemma~\ref{lem:center-manifold} gives the center-manifold expansion in
normalized variables.  We now return to the original coordinates and track
$G_j$, $\phi_j$, and $C_k$ over a complete cycle.  Fix a neighborhood
$0<\epsilon\le\bar\epsilon<1/4$ on which all analytic denominators and the
invariant graph above are defined and the Taylor estimates are uniform.  All
constants below are chosen on this fixed neighborhood.  They do not depend on
the final initial parameter
$\epsilon_0\le\bar\epsilon$.  Choose such an $\epsilon_0>0$, take
$(p_0,h_0)=(p(\epsilon_0),h(\epsilon_0))$, $G_0=1$, and $R_0=I$ in
\eqref{eq:coordinate-representation}, and set $x_0=g_0$, so that
$C_0=x_0-g_0=0$.
These choices define an infinite sequence that satisfies the DFP update
identities.
The variable $C_k=x_k-g_k$ is the minimizer of the quadratic
$z\mapsto\frac12\|z-C_k\|^2$, whose gradient at $x_k$ equals $g_k$.
The tail estimate for $C_k$ will be smaller than the distance from $x_k$ to the
limiting circle.  This difference is used in the interpolation argument.

\begin{lemma}[Asymptotic expansions over two DFP iterations]\label{lem:two-step-expansions}
Let $\phi_j\in\R$ be the angle of the unit eigenvector for the smaller
eigenvalue at iteration $2j$.  Choose this angle continuously along the
sequence.  Let $G_j$ be the corresponding gradient component in
\eqref{eq:coordinate-representation}, and define
\[
C_k=x_k-g_k.
\]
On the invariant center manifold,
\begin{align}
\frac{G_{j+1}}{G_j}
&=1-\frac{13}{2}\epsilon_j^4+O(\epsilon_j^6),
\label{eq:G-map}\\
\phi_{j+1}-\phi_j
&=-3\epsilon_j^2+O(\epsilon_j^4),
\label{eq:phi-map}\\
C_{2j+2}-C_{2j}
&=-\frac{116}{5}G_j\epsilon_j^6e_j
+O(G_j\epsilon_j^7),
\label{eq:C-map}\\
\|C_{2j+1}-C_{2j}\|&=O(G_j\epsilon_j^3).
\label{eq:C-half}
\end{align}
The two consecutive polar-angle increments within a cycle are respectively
\begin{equation}\label{eq:half-angles}
-2\epsilon_j^2+o(\epsilon_j^2),
\qquad -\epsilon_j^2+o(\epsilon_j^2).
\end{equation}
Here $e_j$ is the unit eigenvector corresponding to the smaller eigenvalue,
expressed in the original coordinates.
\end{lemma}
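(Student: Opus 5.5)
The plan is to substitute the center-manifold parametrization of Lemma~\ref{lem:center-manifold}, namely $p=2+P_3\epsilon^3+P_4\epsilon^4+O(\epsilon^5)$ and $h=1+H_3\epsilon^3+O(\epsilon^5)$ with $(P_3,H_3,P_4,H_4)=(198/5,8,-9/5,0)$, into the two-variable expansions displayed in the proof of that lemma. The substitution uses $b=\epsilon$, $r=\epsilon^2$, $P=P_3+P_4\epsilon+\dots$ and $J=H_3+H_4\epsilon+\dots$. Since every quantity below is an analytic function of $(\epsilon,p,h)$ after the singular factors are cancelled, each claimed expansion reduces to reading off Taylor coefficients, with uniform remainders on the fixed neighborhood $0<\epsilon\le\bar\epsilon$.

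For \eqref{eq:G-map} I use the displayed formula for $G_+/G-1$. The $b^2r^2$ term is $O(\epsilon^6)$, which leaves $-117r^2/18=-\tfrac{13}{2}\epsilon^4$, and the remainder $O(r^3)=O(\epsilon^6)$. For \eqref{eq:phi-map}, the displayed formula gives $\phi_+-\phi=-3r+O(r^2)=-3\epsilon^2+O(\epsilon^4)$. The rotation is independent of $G$ and of the overall frame $R_j$, because the recurrence is equivariant under rotations and scaling of $g$. The half-cycle angles \eqref{eq:half-angles} come from the limiting table: in iteration $i$, the rotation angle of the eigenvector is determined, to leading order, by the off-diagonal entry $\pm\epsilon$ or $\mp2\epsilon$ of $A_i$ acting on the $\epsilon^2$-scaled gradient component. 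I will compute each rotation from the perturbed eigenvector of $B_{2j+1}$ in \eqref{eq:first-updated-B}. Its off-diagonal entry relative to the gap gives $-2\epsilon^2$, and the remaining $-\epsilon^2$ follows by subtracting from $-3\epsilon^2$.

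For the $C$ estimates, note that $C_{k+1}-C_k=s_k-y_k=(I-A)s_k$ in the relevant frame. This holds because $y_k=As_k$ with $A$ the secant matrix, and $I-A_i$ is purely off-diagonal of size $O(\epsilon)$. The step is $s=-\alpha Hg$ with $\alpha=O(1)$, and $Hg=GR\,(hpr^2,\,hpr)^T$, so $s=O(G\epsilon^2)$. This gives $\|C_{2j+1}-C_{2j}\|=O(\epsilon\cdot G\epsilon^2)=O(G\epsilon^3)$, which is \eqref{eq:C-half}. For \eqref{eq:C-map}, the order-$\epsilon^3$ contributions of the two iterations must cancel to leave the order-$\epsilon^6$ component along $e_j$. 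Rather than tracking this cancellation by hand, I use the displayed expansion $e^T(C_+-C)=-\tfrac{2b^2(6J-P+96)}9Gr^2+O(G|b|r^3)$. At $J=8$ and $P=198/5$ this gives $6J-P+96=144-198/5=522/5$, so the coefficient is $-\tfrac{2}{9}\cdot\tfrac{522}{5}=-\tfrac{116}{5}$, multiplying $G\epsilon^6$.

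The component of $C_{2j+2}-C_{2j}$ along $e_j^\perp$ must also be shown to be $O(G\epsilon^7)$. I expect this to be the main obstacle, because a naive bound gives only $O(G\epsilon^3)$ per step. I will write $C_{2j+2}-C_{2j}=(I-A_1)s_{2j}+(I-A_2')s_{2j+1}$, where $A_2'$ is $A_2$ in the rotated frame $R^{(1)}_j$, and expand both terms in the basis $R_j$ using the analytic extension. At $\epsilon=0$ the limiting table gives $U_1=1$ and $U_2=2$, so the second components of the two steps are in the ratio that makes $\epsilon\cdot(\cdot)$ from $A_1$ and $-2\epsilon\cdot(\cdot)$ from $A_2$ cancel at orders $\epsilon^3$ through $\epsilon^6$ along $e^\perp$. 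The center-manifold relations \eqref{eq:graph-coefficient-system} should supply exactly these cancellations. If that fails, I will use the exact entrywise formulas of Appendix~\ref{app:algebra} to verify the $e^\perp$ coefficients symbolically up to order $\epsilon^6$.
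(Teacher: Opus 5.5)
Your treatment of \eqref{eq:G-map}, \eqref{eq:phi-map}, \eqref{eq:C-half} and the $e_j$-component of \eqref{eq:C-map} follows the paper's argument: substitute $b=\epsilon$, $r=\epsilon^2$, $P=P_3+P_4\epsilon+\cdots$, $J=H_3+H_4\epsilon+\cdots$ into the two-variable expansions and read off coefficients. Your $(I-A)s_k$ bound for \eqref{eq:C-half} is a clean direct check.

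The genuine gap is \eqref{eq:half-angles}. These increments are polar-angle increments of the iterates, i.e.\ of the directions of $g_k=x_k-C_k$, and of $x_k-C_\infty$ up to $o(\epsilon_j^2)$. That is how Lemma~\ref{lem:separation} uses them; they are not eigenvector rotations. Your predicted value is also wrong for the eigenvector itself. In \eqref{eq:first-updated-B} with $h=1$, $p=2$, $r=\epsilon^2$, the frame $R_j$ gives $(B_{2j+1})_{11}\approx 1/(2\epsilon^4)$ and $(B_{2j+1})_{12}\approx-1/(2\epsilon^2)$. So the eigenvector for the smaller eigenvalue of $H_{2j+1}$ is $\approx(1,-\epsilon^2)^T$. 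The first rotation is therefore $-\epsilon^2$ and the second is $-2\epsilon^2$: the leading rotation in one iteration is $-r$, and $r_j^{(1)}\approx2\epsilon_j^2$. Done correctly, your computation yields $(-\epsilon^2,-2\epsilon^2)$, the reverse of the claimed order, so it cannot prove \eqref{eq:half-angles}.

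The missing ingredient is the angle of the gradient inside the eigenframe. By \eqref{eq:coordinate-representation}, the polar angle of $g_k$ is the eigenvector angle plus $\arctan(pr)$. To leading order, $pr\approx2\epsilon^2,\ \epsilon^2,\ 2\epsilon^2$ at iterations $2j$, $2j+1$, $2j+2$. Equivalently, by \eqref{eq:one-step-g}, the $e^\perp$-component of $g_+$ in the old frame is $Gr\{p-\tau(p+1)\}+O(G\epsilon^5)=o(G\epsilon^2)$. This holds because $p-\tau(p+1)$ vanishes at the limiting values $(p,\tau)=(2,2/3)$ and $(1/2,1/3)$. So each update turns the gradient onto the old eigenvector, and its polar angle drops by about $pr$: first by $2\epsilon^2$, then by $\epsilon^2$. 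In eigenvector terms, the increments are $-\epsilon^2+(\epsilon^2-2\epsilon^2)=-2\epsilon^2$ and $-2\epsilon^2+(2\epsilon^2-\epsilon^2)=-\epsilon^2$.

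A smaller point: your mechanism for the $e_j^\perp$-component of \eqref{eq:C-map} is misattributed. The off-diagonal entries acting on the second components of the steps cancel the order-$\epsilon^3$ $e_j$-components, not the $e_j^\perp$-components. The $e_j^\perp$-components are only $O(G\epsilon^5)$ per step. They equal $+2G\epsilon^5$ from the first step and $-4G\epsilon^5$ from the second step in the frame $R_j^{(1)}$. They cancel only after the $-\epsilon^2$ rotation from $R_j$ to $R_j^{(1)}$ converts the second step's $-2G\epsilon^3$ $e$-component into an extra $+2G\epsilon^5$. Your fallback, reading this off the exact formulas of Appendix~\ref{app:algebra}, is what the paper does.
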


\begin{proof}
Set $P=198/5$ and $J=8$, the leading invariant-manifold coefficients.  Before
putting $b=\epsilon$, the same two-variable expansion used in
\eqref{eq:transverse-map} then gives
\begin{align*}
\frac{G_+}{G}
&=1+\frac{232b^2-65}{10}r^2+O(r^3),\\
\phi_+-\phi&=-3r+O(r^2),\\
e^T(C_+-C)&=-\frac{116}{5}Gb^2r^2\\
&\quad+O(G|b|r^3).
\end{align*}
These remainders are uniform for bounded $b$ in the fixed neighborhood.  The
last remainder is not $o(Gb^2r^2)$ when $b$ and $r$ vary independently.
Along the path used here,
$b=\epsilon$ and $r=\epsilon^2$, it is
$O(G\epsilon^7)=o(G\epsilon^6)$.
The four center manifold coefficients in \eqref{eq:center-curve}, substituted
into the analytic recurrence, sharpen these relations to
\begin{align*}
\frac{G_+}{G}
&=1-\frac{13}{2}\epsilon^4
+\frac{116}{5}\epsilon^6+O(\epsilon^7),\\
\phi_+-\phi&=-3\epsilon^2+O(\epsilon^4),\\
R_j^T(C_{2j+2}-C_{2j})/G_j
&=\binom{-\frac{116}{5}\epsilon^6+O(\epsilon^7)}
{O(\epsilon^8)}.
\end{align*}
Along the single-parameter path $b=\epsilon$, $r=\epsilon^2$, the first-step
formula is
\[
C_{2j+1}-C_{2j}
=G_jR_j\binom{2\epsilon^3+o(\epsilon^3)}
{2\epsilon^5+o(\epsilon^5)}.
\]
These relations prove \eqref{eq:G-map}--\eqref{eq:C-half}.  A direct expansion
in the eigenbases before and after the two updates gives
\eqref{eq:half-angles}.
\end{proof}

\subsection{Scalar asymptotics and the limiting circle}

Lemma~\ref{lem:two-step-expansions} gives local relations from one cycle to the
next.  To use them in the interpolation step, we must show that the recurrence
remains in the chosen neighborhood for all cycles and convert the one-cycle
relations into asymptotic estimates for the full sequence.  The lemma below
does this, introduces the limits $G_\infty$ and $C_\infty$, and identifies the
accumulation set of the iterates.

\begin{lemma}[Scalar asymptotics]\label{lem:scalar-asymptotics}
There are limits $G_\infty>0$ and $C_\infty\in\R^2$ such that
\begin{align}
\epsilon_j&\sim\left(\frac92j\right)^{-1/3},
\label{eq:epsilon-asymptotic}\\
G_j-G_\infty&\sim\frac{13}{3}G_\infty\epsilon_j,
\label{eq:G-tail}\\
\|C_{2j}-C_\infty\|&=O(\epsilon_j^3),
\qquad
\|C_{2j+1}-C_\infty\|=O(\epsilon_j^3).
\label{eq:C-tail}
\end{align}
Also, $\phi_j\to-\infty$, and the accumulation set of the iterates is
the full circle
\begin{equation}\label{eq:limit-circle}
\Gamma=\{C_\infty+G_\infty v:\|v\|=1\}.
\end{equation}
\end{lemma}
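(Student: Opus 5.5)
The plan is to feed the one-cycle relations of Lemma~\ref{lem:two-step-expansions} into elementary scalar-recurrence arguments. The first task is to check that the orbit never leaves the neighbourhood where those relations hold.

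\textbf{Invariance and the rate for $\epsilon_j$.} Start on the invariant graph of Lemma~\ref{lem:center-manifold} with $0<\epsilon_0\le\bar\epsilon$. By \eqref{eq:epsilon-map}, for small $\epsilon>0$ we have $0<\epsilon_+<\epsilon$. Hence by induction every iterate stays on the graph, with $\epsilon_j\in(0,\bar\epsilon]$, so the uniform Taylor estimates apply at every cycle. The sequence $\epsilon_j$ is decreasing and bounded below by $0$, so it has a limit. Any positive limit would be a fixed point, which contradicts \eqref{eq:epsilon-map}; hence $\epsilon_j\downarrow0$. To get the rate, pass to $\epsilon^{-3}$:
\[
\epsilon_{j+1}^{-3}=\epsilon_j^{-3}\bigl(1-\tfrac32\epsilon_j^3+O(\epsilon_j^4)\bigr)^{-3}=\epsilon_j^{-3}+\tfrac92+O(\epsilon_j).
\]
Since $\epsilon_j\to0$, Ces\`aro summation gives $\epsilon_j^{-3}\sim\frac92 j$, which is \eqref{eq:epsilon-asymptotic}.

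\textbf{Limits of $G_j$ and $C_{2j}$.} Both limits follow from tail sums of the form $\sum_{i\ge j}\epsilon_i^m$. From $\epsilon_{i}-\epsilon_{i+1}\sim\frac32\epsilon_i^4$ we get
\[
\sum_{i\ge j}\epsilon_i^{m}\sim\tfrac23\sum_{i\ge j}\epsilon_i^{m-4}(\epsilon_i-\epsilon_{i+1})\sim\tfrac23\int_0^{\epsilon_j}t^{m-4}\,dt=\frac{2}{3(m-3)}\epsilon_j^{m-3}\qquad(m\ge4).
\]
\begin{itemize}
\item \emph{Limit of $G_j$.} By \eqref{eq:G-map}, $\log G_{j+1}-\log G_j=-\frac{13}{2}\epsilon_j^4+O(\epsilon_j^6)$. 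This is summable, so $G_j\to G_\infty>0$. The tail formula with $m=4$ gives $\log(G_j/G_\infty)\sim\frac{13}{2}\cdot\frac23\epsilon_j=\frac{13}{3}\epsilon_j$, which yields \eqref{eq:G-tail}.
\item \emph{Limit of $C_{2j}$.} Since $G_j$ is bounded, \eqref{eq:C-map} gives $\|C_{2j+2}-C_{2j}\|=O(\epsilon_j^6)$. So $C_{2j}$ is Cauchy and converges to some $C_\infty$. The tail formula with $m=6$ gives $\|C_{2j}-C_\infty\|=O(\epsilon_j^3)$.
\item \emph{Odd indices.} Combining the even-index bound with \eqref{eq:C-half} gives the bound for $C_{2j+1}$. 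This proves \eqref{eq:C-tail}.
\end{itemize}

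\textbf{Accumulation set.} By \eqref{eq:phi-map}, $\phi_{j+1}-\phi_j=-3\epsilon_j^2(1+O(\epsilon_j^2))$. Since $\epsilon_j^2\asymp j^{-2/3}$ is not summable, $\phi_j\to-\infty$.

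Next write $x_k=C_k+g_k$. Because $C_k\to C_\infty$, the accumulation points of $x_k$ are those of $C_\infty+g_k$. At even indices, $g_{2j}=G_jR_j\widehat g_j=G_je_j+O(\epsilon_j^4)$, using $p_j$ bounded and $r_j=\epsilon_j^2$. Hence $x_{2j}-C_\infty-G_\infty e_j\to0$, where $e_j=(\cos\phi_j,\sin\phi_j)$.

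The angles $\phi_j$ decrease to $-\infty$ with steps tending to $0$. Therefore, for each target angle $\theta$ and each integer $N$, some index $j$ has $\phi_j$ within the current step size of $\theta-2\pi N$. So every point of $\Gamma$ is a limit of a subsequence of the $x_{2j}$.

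For the converse inclusion, the odd iterates are handled the same way with their own basis $R_j^{(1)}$. Using $g_{2j+1}=G_j^{(1)}e_j^{(1)}+O(\epsilon_j^2)$ together with $G_j^{(1)}\to G_\infty$, every odd-index accumulation point also lies on $\Gamma$. The same conclusion follows more directly from $\|g_{2j+1}-g_{2j}\|\to0$. That in turn follows from \eqref{eq:first-updated-gradient}, whose correction term is $O(r_j)$. Hence the accumulation set equals $\Gamma$, which is \eqref{eq:limit-circle}.

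\textbf{Main obstacle.} The delicate points are the uniformity of the $O(\cdot)$ constants along the whole orbit and the precise constant $\frac{13}{3}$. The uniformity is secured by the forward invariance established in the first step. The constant requires the sum-to-integral comparison above, made with relative errors $O(\epsilon_j)$ coming from \eqref{eq:epsilon-map}. That part should be routine but needs care.
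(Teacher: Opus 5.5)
Your proof is correct and follows the paper's argument essentially step for step: the recursion $\epsilon_{j+1}^{-3}-\epsilon_j^{-3}=\frac92+O(\epsilon_j)$, the tail estimates $\sum_{\ell\ge j}\epsilon_\ell^4\sim\frac23\epsilon_j$ and $\sum_{\ell\ge j}\epsilon_\ell^6=O(\epsilon_j^3)$ for $G_j$ and $C_{2j}$, the divergence of $\sum\epsilon_j^2$ for $\phi_j$, and a small-step argument that gives the odd iterates the same accumulation set as the even ones. One harmless slip: $g_{2j}-G_je_j=G_jp_jr_je_j^\perp$ is $O(\epsilon_j^2)$, not $O(\epsilon_j^4)$ (only $\|g_{2j}\|-G_j$ is $O(\epsilon_j^4)$), but this error still tends to zero, so the conclusion is unaffected.
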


\begin{proof}
Equation \eqref{eq:epsilon-map} gives
\[
\epsilon_{j+1}^{-3}-\epsilon_j^{-3}
=\frac92+O(\epsilon_j),
\]
which proves \eqref{eq:epsilon-asymptotic}.  In particular,
\[
\sum_j\epsilon_j^4<\infty,
\qquad
\sum_j\epsilon_j^2=\infty.
\]
The infinite product $\prod_j(G_{j+1}/G_j)$ in \eqref{eq:G-map}
converges to a positive limit $G_\infty$.  Comparing the tails of
\eqref{eq:G-map} with
\eqref{eq:epsilon-map} gives
\[
\sum_{\ell\ge j}\epsilon_\ell^4\sim\frac23\epsilon_j,
\]
which proves \eqref{eq:G-tail}.  The series in \eqref{eq:C-map} is absolutely
convergent, with
\[
\sum_{\ell\ge j}\epsilon_\ell^6=O(\epsilon_j^3).
\]
Combining this with \eqref{eq:C-half} proves \eqref{eq:C-tail}.

Equation \eqref{eq:phi-map} and the divergent sum of $\epsilon_j^2$ imply
$\phi_j\to-\infty$.  The angular increments tend to zero, so every angle
modulo $2\pi$ is approached by the even subsequence.  Direct substitution in
\eqref{eq:abstract-step} and \eqref{eq:coordinate-representation} gives
$\|x_{2j+1}-x_{2j}\|=\|s_{2j}\|=O(\epsilon_j^2)$, so the odd and even
subsequences have the same accumulation set.  Equations \eqref{eq:G-tail} and
\eqref{eq:C-tail} show that this set is the circle in
\eqref{eq:limit-circle}.
\end{proof}

\section{Construction of a uniformly convex objective}
\label{sec:global-objective}

So far, the matrices $A_i(\epsilon)$ have only defined the secant pairs in the
recurrence.  We now construct one objective with the prescribed function
values and gradients at every iterate.  First we show that iterates from
different turns around the limiting circle remain separated.  This allows
local interpolation on pairwise disjoint neighborhoods and preserves uniform
convexity.  For an iterate $x_k$ in cycle $j$, write $r(k)=\epsilon_j^2$.
Convergence to a circle alone does not give disjoint neighborhoods.  Iterates
from different turns could be much closer than $r(k)$.

\subsection{Uniform separation of the iterates}

The estimate must cover neighboring iterates, near returns after one turn, and
pairs separated by many turns.  The following lemma gives one uniform lower
bound for all three cases.  This is the geometric estimate needed for the
interpolation construction.

\begin{lemma}[Uniform separation of the iterates]\label{lem:separation}
After decreasing $\epsilon_0$ if necessary, there is a constant $c_*>0$ such
that, for every $k$,
\begin{equation}\label{eq:separation}
\operatorname{dist}\!\left(
x_k,(\{x_\ell:\ell\ge0\}\cup\Gamma)\setminus\{x_k\}
\right)
\ge c_*r(k).
\end{equation}
\end{lemma}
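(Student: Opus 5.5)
We argue in polar coordinates about $C_\infty$. Write $x_k=C_\infty+\rho_k(\cos\theta_k,\sin\theta_k)^T$. Since $x_k=C_k+g_k$, Lemma~\ref{lem:scalar-asymptotics} gives
\[
\rho_k=\|g_k\|+O(\epsilon_j^3).
\]
The gradient at cycle start is $g_{2j}=G_jR_j(1,p_jr_j)^T$, so $\|g_{2j}\|=G_j+O(\epsilon_j^4)$. Then \eqref{eq:G-tail} gives
\[
\rho_{2j}-G_\infty=\tfrac{13}{3}G_\infty\epsilon_j+O(\epsilon_j^2).
\]
The same estimate holds at odd indices: $\|g_{2j+1}\|-\|g_{2j}\|=O(\epsilon_j^3)$ from \eqref{eq:first-updated-gradient}, because the gradient change is of order $r_j\cdot$(component $r_j+\epsilon_j$ in the first coordinate). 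So every iterate lies at radial distance $\asymp\epsilon_j$ outside $\Gamma$.

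The distance to $\Gamma$ is then immediate: $\operatorname{dist}(x_k,\Gamma)=|\rho_k-G_\infty|\ge c\,\epsilon_j\gg c_*\epsilon_j^2=c_*r(k)$, once $\epsilon_0$ is small.

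For two iterates $x_k,x_\ell$ with $k<\ell$, in cycles $j\le i$, we split into cases by the angular difference $\Delta=\theta_k-\theta_\ell$. Here $\theta$ is unwrapped, and its increments are $-2\epsilon_j^2+o(\epsilon_j^2)$ and $-\epsilon_j^2+o(\epsilon_j^2)$ by \eqref{eq:half-angles}. Because the polar angle of $x_k-C_\infty$ differs from that of $g_k$ only by $O(\epsilon_j^3)$, and $g_{2j}$ is aligned with $e_j$ up to $O(r_j)$, these increments transfer to $\theta$. So $\theta$ is strictly decreasing with steps $\ge\epsilon^2_{(\cdot)}$ up to constants.

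\textbf{Case 1: $\Delta\le\pi$ modulo nothing, i.e.\ before a full turn.} Since $\rho\approx G_\infty$, the chord length is at least $c\sin(\min(\Delta,\pi)/2)\,G_\infty$. Also $\Delta\ge\epsilon_j^2$ up to constants, because at least one step lies in cycles $\ge j$ and $\epsilon$ varies slowly: $\epsilon_{j+1}/\epsilon_j\to1$ over $O(\epsilon_j^{-2})$ cycles, which is all one turn needs. This gives distance $\gtrsim r(k)$.

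\textbf{Case 2: $\Delta\in[\pi,2\pi-\eta]$.} The angles are separated by a fixed amount, so the distance is $\gtrsim1$.

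\textbf{Case 3: $\Delta$ near $2\pi m$ with $m\ge1$.} This is the main obstacle: returns after $m$ full turns could land close in angle. Here we use the radial gap instead. A full turn takes $N\asymp\epsilon_j^{-2}$ cycles. Over those cycles $\epsilon$ decreases by about $\tfrac32\epsilon^4N\asymp\epsilon_j^2$ by \eqref{eq:epsilon-map}. Hence $\rho$ decreases by $\tfrac{13}{3}G_\infty(\epsilon_j-\epsilon_i)\gtrsim\epsilon_j^2$, with error $O(\epsilon_j^2)$ only from the $O(\epsilon^2)$ remainder in the $\rho$ expansion.

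That error is the delicate point. We sharpen the radius expansion by summing exactly, $\rho_{2j}-G_\infty=G_\infty\sum_{\ell\ge j}\tfrac{13}{2}\epsilon_\ell^4(1+O(\epsilon_\ell^2))+O(\epsilon_j^3)$, which makes $\rho_k$ strictly monotone in $k$ with decrements $\asymp\epsilon^4$ per cycle and a relative error $o$ of the main term. The differences are then $\rho_k-\rho_\ell=\tfrac{13}{2}G_\infty\sum_{\ell'=j}^{i}\epsilon_{\ell'}^4(1+o(1))+O(\epsilon_j^3)$-type corrections. The corrections must be controlled: the $C$-tail drift from \eqref{eq:C-map} contributes differences of only $O(\epsilon_j^6)$ per cycle, so its accumulated effect is comparable to the main term only with a smaller constant $\tfrac{116}{5}\epsilon^6\ll\epsilon^4$. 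The odd offsets are handled the same way, by comparing within the same parity, or by absorbing the fixed $O(\epsilon^3)$ offset pattern, which is cycle-periodic to leading order.

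This gives $|\rho_k-\rho_\ell|\gtrsim m\epsilon_j^2$ in Case 3, using $N\epsilon_j^4\asymp\epsilon_j^2$ per turn. The distance is therefore $\ge c_*r(k)$. Finally we decrease $\epsilon_0$ so that all the $o(1)$ terms are below half the leading constants, and take $c_*$ as the minimum over the three cases.
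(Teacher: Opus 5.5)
Your proof is correct and follows essentially the same route as the paper. You use polar radius $G_j+O(\epsilon_j^3)$ about $C_\infty$ at both iterates of cycle $j$, a chord bound from angular gaps of order $r_j$ when the angle difference stays away from $2\pi\mathbb Z$, and, for near-returns after $m\ge1$ turns, a radial drop of order $r_j$ obtained by summing the positive decrements $G_t-G_{t+1}\asymp\epsilon_t^4$ over the roughly $\epsilon_j^{-2}$ cycles of one turn. The only difference is that you order the cases by angle rather than by scale, so your telescoping sum also absorbs the paper's separated-scales case.

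Three intermediate claims are off, though none is needed:
\begin{itemize}
\item Your ``exact'' expansion of $\rho_{2j}-G_\infty$ is accurate only to $O(\epsilon_j^2)$, because $G_\ell=G_\infty(1+O(\epsilon_\ell))$, not $O(\epsilon_j^3)$.
\item Per-step monotonicity of $\rho_k$ does not follow from that expansion.
\item The bound $\gtrsim m\epsilon_j^2$ fails for large $m$ once the scales separate, since the total remaining radial drop is only $O(\epsilon_j)$.
\end{itemize}
The argument needs only $\rho_k-\rho_\ell\ge (G_j-G_i)-O(\epsilon_j^3)$ with $G_j-G_i\gtrsim r_j$ for $m\ge1$, and you do state that.
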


\begin{proof}
At iteration $2j$, identify $\R^2$ with $\mathbb C$ in the oriented
eigenbasis.  Then
\begin{equation}\label{eq:polar-boundary}
x_{2j}-C_\infty
=G_je^{i\phi_j}\bigl(1+2i\epsilon_j^2+o(\epsilon_j^2)\bigr).
\end{equation}
The polar angles of the iterates are strictly decreasing for small
$\epsilon_0$.  By \eqref{eq:half-angles}, after one uniform reduction of
$\epsilon_0$ the two
consecutive gaps in cycle $j$ lie between $r_j/2$ and $3r_j$.  We fix
$c_\theta=1/2$ and $C_\theta=3$ on this neighborhood.
If needed, enlarge $C_\theta$ so that any block of $N$ cycles with comparable
scales rotates by at most $NC_\theta r_j$.  During one
complete turn, $\epsilon$ changes by only a relative $O(\epsilon_j)$.

For $\sigma\in\{0,1\}$, corresponding to the first and second iterates in a
cycle, respectively, the polar radius satisfies
\begin{equation}\label{eq:within-cycle-radius}
\|x_{2j+\sigma}-C_\infty\|
=G_j+O(\epsilon_j^3)=G_j+o(r_j),
\qquad r_j=\epsilon_j^2.
\end{equation}
We also need a uniform version.  There is a function
$\omega(\eta)\downarrow0$ such that,
whenever $\epsilon_0\le\eta$,
\begin{equation}\label{eq:within-cycle-radius-uniform}
\left|\norm{x_{2j+\sigma}-C_\infty}-G_j\right|
\le \omega(\eta)r_j
\qquad(\sigma=0,1;\ j\ge0).
\end{equation}
Fix two distinct iterates $x_{2j+\sigma}$ and $x_{2\ell+\vartheta}$, where
$\sigma,\vartheta\in\{0,1\}$ and $\ell\ge j$, and normalize the estimates by
$r_j$.  Let $\Theta_{j\sigma,\ell\vartheta}>0$ denote the accumulated
clockwise angle from the earlier iterate to the later one; strict positivity
follows from \eqref{eq:half-angles}.  The following three cases cover all
possibilities.

Fix
\begin{equation}\label{eq:kappa-choice}
\frac1{\sqrt2}<\kappa<1.
\end{equation}
After reducing $\epsilon_0$ if needed, the recurrence makes $\epsilon_j$
strictly decreasing.  For a pair with $\ell\ge j$, we call
the scales \emph{comparable} when
$\kappa\epsilon_j<\epsilon_\ell\le\epsilon_j$; otherwise they are separated.
\emph{Separated scales.}  Suppose
$\epsilon_\ell\le\kappa\epsilon_j$.
Equations \eqref{eq:G-tail} and
\eqref{eq:within-cycle-radius} give
\[
G_j-G_\ell
=\frac{13}{3}G_\infty(\epsilon_j-\epsilon_\ell)(1+o(1))
\ge c\epsilon_j\gg r_j.
\]
The radial component alone gives the required separation.

\emph{Comparable scales with angular separation.}  Suppose the two scales
are comparable and their angular separation modulo $2\pi$ is at least
$r_j/4$.
Both polar radii have a uniform positive
lower bound, and the chord formula gives distance at least $c r_j$.
This case includes adjacent iterates because their angular gaps are bounded
below by a fixed multiple of $r_j$.

\emph{Comparable scales after one or more complete turns.}  It remains to consider
comparable scales with
\begin{equation}\label{eq:unwrapped-return}
\Theta_{j\sigma,\ell\vartheta}=2\pi m+\zeta,
\qquad |\zeta|<r_j/4,
\end{equation}
for an integer $m\ge0$.  The lower bound on consecutive
angular gaps excludes $m=0$: after decreasing $\epsilon_0$, each consecutive
gap is at least $r_t/2$, while comparability and
\eqref{eq:kappa-choice} give
$r_t/2\ge\kappa^2r_j/2>r_j/4$.  Every near return contains at least
one complete turn, so $m\ge1$.

Let $N=\ell-j$.  Here $N\ge1$; by enlarging $C_\theta$ once, the two
within-cycle angular offsets are absorbed into the same $NC_\theta r_j$
bound.  While the scales are comparable, the expansion
\eqref{eq:G-map} gives a fixed $c_G>0$ such that
$G_t-G_{t+1}\ge c_G r_j^2$.  Equation
\eqref{eq:unwrapped-return}, together with the upper bound on the accumulated
angle, gives
\begin{equation}\label{eq:cycle-count}
NC_\theta r_j\ge 2\pi m-r_j/4,
\end{equation}
and gives
\begin{equation}\label{eq:radial-return-gap}
G_j-G_\ell
\ge Nc_G r_j^2
\ge \frac{c_G}{C_\theta}(2\pi m-r_j/4)r_j
\ge c_0m r_j
\end{equation}
for a uniform $c_0>0$.  Reduce $\epsilon_0$ once more so that
$2\omega(\epsilon_0)\le c_0/2$ in
\eqref{eq:within-cycle-radius-uniform}.  The two error terms for positions
within cycles are then less than half of the lower bound in
\eqref{eq:radial-return-gap}, uniformly in the pair and in $m$.  For $m=1$,
summing the leading terms gives the
sharper one-turn
formula
\begin{equation}\label{eq:one-turn-drop}
G_j-G_\ell=\frac{13\pi}{3}G_jr_j(1+o(1)).
\end{equation}
This estimate covers different positions within a cycle, neighboring turns,
and returns after several turns.  It does not assume that the radius
$\|x_k-C_\infty\|$ is monotone at every individual cycle.

Equations \eqref{eq:G-tail}, \eqref{eq:C-tail}, and
\eqref{eq:polar-boundary} imply
\[
\operatorname{dist}(x_k,\Gamma)
=\frac{13}{3}G_\infty\epsilon_j(1+o(1))\gg r(k).
\]
All constants used in the three cases were fixed on the initial neighborhood
$0<\epsilon\le\bar\epsilon$.  They do not depend on the final, sufficiently
small choice of $\epsilon_0$.  This proves
\eqref{eq:separation}.
\end{proof}

\subsection{Local interpolation with disjoint supports}

The separation estimate gives disjoint supports.  Since
$C_k-C_\infty=O(\epsilon_j^3)$ while the support radii are of order
$\epsilon_j^2$, the interpolating functions have small second derivatives.
Define the closed set
\[
\mathcal E=\Gamma\cup\{x_k:k\ge0\}
\]
and put
\begin{equation}\label{eq:dk-rhok}
d_k=\operatorname{dist}(x_k,\mathcal E\setminus\{x_k\}),
\qquad \rho_k=\frac14d_k.
\end{equation}
By Lemma~\ref{lem:separation}, $\rho_k\ge c r(k)$; the adjacent iterate
also gives $\rho_k\le Cr(k)$.  Here $\mathbb B(x,\rho)$ denotes the open
Euclidean ball with center $x$ and radius $\rho$.  The balls
$\mathbb B(x_k,\rho_k)$ are pairwise disjoint and avoid $\Gamma$.  Let
\begin{equation}\label{eq:correction-vector}
a_k=g_k-(x_k-C_\infty)=C_\infty-C_k.
\end{equation}
Lemma~\ref{lem:scalar-asymptotics} gives
\begin{equation}\label{eq:ak-bound}
\|a_k\|=O(\epsilon_j^3)=O(\epsilon_jr(k)).
\end{equation}
Choose one $\chi\in C_c^\infty(\mathbb B(0,1))$ with $\chi\equiv1$ on
$\mathbb B(0,1/3)$, and define
\begin{equation}\label{eq:local-interpolants}
\psi_k(z)=
\chi\!\left(\frac{z-x_k}{\rho_k}\right)a_k^T(z-x_k),
\qquad
\Psi(z)=\sum_{k=0}^\infty\psi_k(z).
\end{equation}

The separation estimate makes the supports pairwise disjoint.
Equation~\eqref{eq:ak-bound} makes the second derivatives small.  The next
lemma shows that $\Psi$ extends across the limiting circle with zero value,
gradient, and Hessian.  It also gives a uniform Hessian bound.  This step turns
the prescribed endpoint data into a single $C^2$ perturbation on $\R^2$.

\begin{lemma}[$C^2$ interpolation with disjoint supports]
\label{lem:disjoint-interpolation}
The function $\Psi$, extended by zero on $\Gamma$, belongs to
$C^2(\R^2)$.  It satisfies
\begin{equation}\label{eq:interpolation-jets}
\Psi(x_k)=0,\qquad \nabla\Psi(x_k)=a_k,
\end{equation}
and
\begin{equation}\label{eq:interpolation-hessian-bound}
\sup_{z\in\R^2}\|\nabla^2\Psi(z)\|
\le K\epsilon_0.
\end{equation}
\end{lemma}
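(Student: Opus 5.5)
The plan is to check the interpolation identities first, then derive pointwise bounds for each $\psi_k$ on its own ball, and finally handle regularity across $\Gamma$, which is the only delicate part.

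\emph{Interpolation identities.} Near $x_k$ we have $\chi\equiv1$ on $\mathbb B(x_k,\rho_k/3)$, so $\psi_k(z)=a_k^T(z-x_k)$ there. Every other $\psi_\ell$ is supported in $\mathbb B(x_\ell,\rho_\ell)$. These balls are pairwise disjoint: by \eqref{eq:dk-rhok}, $\rho_k+\rho_\ell\le\frac14(d_k+d_\ell)\le\frac12\|x_k-x_\ell\|$. Hence $\Psi=\psi_k$ on $\mathbb B(x_k,\rho_k)$, and \eqref{eq:interpolation-jets} follows at once. The same disjointness shows that $\Psi$ is a locally finite sum, hence $C^\infty$, on the open set $\R^2\setminus\Gamma$. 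This holds because each point off $\Gamma$ has a neighborhood meeting only finitely many balls: the $x_k$ accumulate only on $\Gamma$ (Lemma~\ref{lem:scalar-asymptotics}), and $\rho_k\to0$.

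\emph{Uniform bounds on one ball.} Write $u=(z-x_k)/\rho_k$. Then
\[
\nabla\psi_k=\rho_k^{-1}\nabla\chi(u)\,a_k^T(z-x_k)+\chi(u)a_k,
\qquad
\nabla^2\psi_k=\rho_k^{-2}\nabla^2\chi(u)\,a_k^T(z-x_k)+\rho_k^{-1}\bigl(\nabla\chi(u)a_k^T+a_k\nabla\chi(u)^T\bigr).
\]
On the support, $\|z-x_k\|\le\rho_k$. This gives
\[
|\psi_k|\le\|a_k\|\rho_k,\qquad \|\nabla\psi_k\|\le C_\chi\|a_k\|,\qquad \|\nabla^2\psi_k\|\le C_\chi\|a_k\|/\rho_k .
\]
Now combine \eqref{eq:ak-bound} with the lower bound $\rho_k\ge c\,r(k)$ from Lemma~\ref{lem:separation}. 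This yields $\|\nabla^2\psi_k\|\le K\epsilon_j$, and $\epsilon_j\le\epsilon_0$ because the $\epsilon_j$ decrease. By disjointness, the supremum in \eqref{eq:interpolation-hessian-bound} is the supremum over $k$ of these bounds. We must check that the implicit constants in \eqref{eq:ak-bound} and in $c_*$ are uniform and independent of the final $\epsilon_0$. This is exactly how they were fixed in Lemmas~\ref{lem:scalar-asymptotics} and~\ref{lem:separation}, so $K$ depends only on $\chi$ and on the fixed neighborhood.

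\emph{Regularity across $\Gamma$.} This is the expected main obstacle. Take $z_0\in\Gamma$ and set $\Psi(z_0)=0$, $\nabla\Psi(z_0)=0$, $\nabla^2\Psi(z_0)=0$. Any point $z$ with $\Psi(z)\neq0$ lies in some ball $\mathbb B(x_k,\rho_k)$, and that ball avoids $\Gamma$. In fact $\rho_k\le\frac14\operatorname{dist}(x_k,\Gamma)$, so $\|z-z_0\|\ge\frac34\operatorname{dist}(x_k,\Gamma)\ge c\,\epsilon_j$ by the estimate $\operatorname{dist}(x_k,\Gamma)\asymp\epsilon_j$ at the end of Lemma~\ref{lem:separation}. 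As $z\to z_0$, the index $k$ of the ball containing $z$ must tend to infinity. Therefore
\[
|\Psi(z)|\le\|a_k\|\rho_k=O(\epsilon_j^5),\qquad \|\nabla\Psi(z)\|=O(\epsilon_j^3),\qquad \|\nabla^2\Psi(z)\|=O(\epsilon_j).
\]
Each of these is $o(1)$, and the first two are $o(\|z-z_0\|^2)$ and $o(\|z-z_0\|)$ respectively, since $\|z-z_0\|\gtrsim\epsilon_j$. Hence $\Psi$ is differentiable at $z_0$ with zero gradient. Next, $\nabla\Psi(z)-\nabla\Psi(z_0)=o(\|z-z_0\|)$, so $\nabla\Psi$ is differentiable at $z_0$ with zero Hessian. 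Finally, $\nabla^2\Psi(z)\to0$, which gives continuity of the Hessian at $z_0$.

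Points of $\Gamma$ are the only accumulation points of the supports, so this completes the proof that $\Psi\in C^2(\R^2)$. The care needed is in making the limit uniform: every ball near $z_0$ has a large index, so each of the bounds above is uniformly small there.
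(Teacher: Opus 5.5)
Your proposal is correct and follows essentially the same route as the paper. It bounds $\psi_k$, $\nabla\psi_k$, and $\nabla^2\psi_k$ on each disjoint ball by $O(\epsilon_j^5)$, $O(\epsilon_j^3)$, and $O(\epsilon_j)$, using $\|a_k\|=O(\epsilon_j^3)$ and $\rho_k\asymp\epsilon_j^2$; it uses local finiteness off $\Gamma$; and it justifies the zero $2$-jet on $\Gamma$ through $\operatorname{dist}(\cdot,\Gamma)\gtrsim\epsilon_j$ on the $k$th support. Your explicit checks that $\rho_k+\rho_\ell\le\frac12\|x_k-x_\ell\|$ and $\rho_k\le\frac14\operatorname{dist}(x_k,\Gamma)$ make precise two facts the paper only asserts.
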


\begin{proof}
Write $u=z-x_k$ and $\xi=u/\rho_k$.  Direct differentiation gives
\begin{align*}
\nabla\psi_k(z)
&=\chi(\xi)a_k
+\rho_k^{-1}(a_k^Tu)\nabla\chi(\xi),\\
\nabla^2\psi_k(z)
&=\rho_k^{-1}\{a_k\nabla\chi(\xi)^T
+\nabla\chi(\xi)a_k^T\}
+\rho_k^{-2}(a_k^Tu)\nabla^2\chi(\xi).
\end{align*}
On the support, $\|u\|\le\rho_k$.  The fixed cutoff together with
\eqref{eq:dk-rhok} and \eqref{eq:ak-bound} implies
\begin{align*}
\|\psi_k\|_\infty&=O(\epsilon_jr(k)^2),\\
\|\nabla\psi_k\|_\infty&=O(\epsilon_jr(k)),\\
\|\nabla^2\psi_k\|_\infty&=O(\epsilon_j).
\end{align*}
At most one summand is nonzero at any point.  The centers have no accumulation
point outside $\Gamma$, and $\rho_k\to0$, so the supports are locally finite on
$\R^2\setminus\Gamma$.  Define on the circle
\begin{equation}\label{eq:zero-circle-jet}
\Psi|_\Gamma=0,\qquad
\nabla\Psi|_\Gamma=0,\qquad
\nabla^2\Psi|_\Gamma=0.
\end{equation}
We verify that these values and derivatives agree with the limits from
$\R^2\setminus\Gamma$.  Equations
\eqref{eq:G-tail} and \eqref{eq:C-tail} give
\[
\operatorname{dist}(x_k,\Gamma)=\Theta(\epsilon_j),
\qquad \rho_k=O(\epsilon_j^2).
\]
Every point $z$ in the $k$th support has
$\operatorname{dist}(z,\Gamma)=\Theta(\epsilon_j)$, and
\[
\frac{|\psi_k(z)|}{\operatorname{dist}(z,\Gamma)^2}=O(\epsilon_j^3),
\qquad
\frac{\|\nabla\psi_k(z)\|}{\operatorname{dist}(z,\Gamma)}
=O(\epsilon_j^2),
\qquad
\|\nabla^2\psi_k(z)\|=O(\epsilon_j).
\]
If $z$ lies outside all supports, the three quantities are already zero.  If
$z$ lies in the $k$th support and tends to any $\zeta\in\Gamma$, then
$\operatorname{dist}(z,\Gamma)\le\norm{z-\zeta}$, and the displayed estimates
imply
\[
\frac{|\Psi(z)-\Psi(\zeta)-\nabla\Psi(\zeta)^T(z-\zeta)|}
     {\norm{z-\zeta}^2}\longrightarrow0,
\qquad
\frac{\norm{\nabla\Psi(z)-\nabla\Psi(\zeta)}}
     {\norm{z-\zeta}}\longrightarrow0,
\]
while $\nabla^2\Psi(z)\to0$.  The value, gradient, and Hessian in
\eqref{eq:zero-circle-jet} are continuous at every point of $\Gamma$, and
$\Psi\in C^2(\R^2)$.  The function, gradient, and Hessian tend to zero at
$\Gamma$.  No additional extension theorem is needed.
The cutoff is constant near each point $x_k$, which gives
\eqref{eq:interpolation-jets}.  The constants in the separation estimate, the upper
bound $\rho_k\le Cr(k)$, and the derivatives of the fixed cutoff were chosen
on the fixed neighborhood $\epsilon\le\bar\epsilon$.  They give one constant
$K$ that does not depend on the later choice of
$\epsilon_0\le\bar\epsilon$.  Disjointness and
$\epsilon_j\le\epsilon_0$ then give
\eqref{eq:interpolation-hessian-bound}.
\end{proof}

\subsection{Definition of the objective}

Lemma~\ref{lem:disjoint-interpolation} gives a $C^2$ correction with a
uniformly small Hessian.  Adding it to the reference quadratic gives the
required objective while preserving the prescribed function values and
gradients.  Set
\begin{equation}\label{eq:objective}
f(z)=\frac12\|z-C_\infty\|^2+\Psi(z).
\end{equation}
After $K$ has been fixed, take
$\epsilon_0\le\min\{\bar\epsilon,(2K)^{-1}\}$, together with the earlier
upper bounds on $\epsilon_0$.  Then the right-hand side of
\eqref{eq:interpolation-hessian-bound} is at most $1/2$, which gives the global
Hessian bounds in Theorem~\ref{thm:main}.  At every iterate,
\begin{equation}\label{eq:endpoint-data}
f(x_k)=\frac12\|x_k-C_\infty\|^2,
\qquad
\nabla f(x_k)=x_k-C_\infty+a_k=g_k.
\end{equation}
This gives $\nabla f(x_k)=g_k$ for every $k$.  In particular, each $y_k$ is the
corresponding gradient difference, and the DFP updates defined above are
unchanged.

\section{Verification of the counterexample}
\label{sec:completion}

For the objective just constructed, $(x_k,g_k,H_k)$ is a classical DFP
sequence.  We verify the line-search inequalities, prove nonconvergence in
every dimension, and compare the example with earlier DFP convergence results.

\subsection{Armijo decrease and the strong Wolfe curvature condition}

It remains to verify that the prescribed positive steps are valid for the
objective just constructed.  We first prove the Armijo inequality
from the endpoint values and then use the exact secant ratio to verify the
strong Wolfe curvature condition.  Let $q_k=-g_k^Ts_k>0$.  Since
$\Psi(x_k)=0$ at every iterate,
\eqref{eq:endpoint-data} gives
\begin{align}
\frac{f(x_k)-f(x_{k+1})}{q_k}
&=1-\frac{\|s_k\|^2}{2q_k}
+\frac{a_k^Ts_k}{q_k}.
\label{eq:armijo-ratio}
\end{align}
The expansions in Section~\ref{sec:alternating-sequence} give
$\|s_k\|=\Theta(r(k))$ and
$q_k=\Theta(r(k)^2)$.  Equations \eqref{eq:ak-bound} and
\eqref{eq:line-ratio}, together with $A_i(\epsilon)\to I$, imply
\[
\frac{a_k^Ts_k}{q_k}=O(\epsilon_j),
\qquad
\frac{\|s_k\|^2}{q_k}=\tau_k+O(\epsilon_j).
\]
The two limits of \eqref{eq:armijo-ratio} are
\[
1-\frac12\frac23=\frac23,
\qquad
1-\frac12\frac13=\frac56.
\]
The error terms are uniform for $0<\epsilon_j\le\epsilon_0$.  Given any
$c_1<2/3$, one final reduction of $\epsilon_0$ makes every ratio in
\eqref{eq:armijo-ratio} at least $c_1$.  This proves the Armijo inequality
throughout the range in \eqref{eq:wolfe-range}.

The curvature calculation is exact and does not depend on objective values:
\[
g_{k+1}^Ts_k=g_k^Ts_k+s_k^Ty_k
=-(1-\tau_k)q_k.
\]
Since $g_k^Ts_k=-q_k$, the ratio in the strong Wolfe curvature condition is
\[
\frac{|g_{k+1}^Ts_k|}{|g_k^Ts_k|}=1-\tau_k
\in\left\{\frac13,\frac23\right\}.
\]
Both iterations in each cycle satisfy \eqref{eq:strong-curvature} exactly when
$c_2\ge2/3$.  They also satisfy the weak Wolfe curvature inequality.  This
proves the claim for every pair in \eqref{eq:wolfe-range}.  The sequence is fixed once
$\epsilon_0$ is chosen; only the required upper bound on $\epsilon_0$ may
depend on $c_1$.  This verification concerns the accepted positive steps; it
does not assert that a particular strong Wolfe line-search algorithm must
generate them.

\subsection{Nonconvergence and extension to higher dimensions}

We now verify the failure of global convergence.  We then embed the
two-dimensional construction in higher dimensions.
Lemma~\ref{lem:scalar-asymptotics} and
\eqref{eq:coordinate-representation} give $\|g_{2j}\|\to G_\infty$.  On the
first iteration of each cycle,
$\|g_{2j+1}-g_{2j}\|=\|y_{2j}\|=O(\epsilon_j^2)$, so
\[
\|g_k\|\longrightarrow G_\infty>0.
\]
All $H_k$ remain positive definite because $s_k^Ty_k=s_k^TA_ks_k>0$ and
DFP preserves positive definiteness.  This completes the construction in
dimension two.  For $n>2$, define
\[
\widehat f(z,w)=f(z)+\frac12\|w\|^2,
\qquad
\widehat H_0=H_0\oplus I,
\qquad w_0=0.
\]
The entire iteration remains in the first two coordinates, so all identities
and inequalities are unchanged.  This completes the detailed verification of Theorem~\ref{thm:main}.

\subsection{Identity initialization under an affine change of variables}

The remaining claim from Section~\ref{sec:statement} concerns the standard
initialization $H_0=I$.  The affine covariance of DFP allows us to normalize
the initial matrix without changing the accepted step lengths or the Wolfe
inequalities.

\begin{proof}[Proof of Corollary~\ref{cor:identity-initialization}]
We obtain the identity-initialized version by applying an affine change of
variables to the two-dimensional example.  Take the example in
Theorem~\ref{thm:main}, and let
$L=H_0^{1/2}$ be the symmetric positive definite square root.  Define
\[
\widetilde f(z)=f(Lz),\quad z_k=L^{-1}x_k,\quad
\widetilde g_k=\nabla\widetilde f(z_k)=L^Tg_k,\quad
\widetilde H_k=L^{-1}H_kL^{-T}.
\]
Then $\widetilde H_0=I$, and the transformed search direction satisfies
\[
-\widetilde H_k\widetilde g_k
=L^{-1}(-H_kg_k).
\]
The same step length $\alpha_k$ gives
$\widetilde s_k=L^{-1}s_k$ and
$\widetilde y_k=\widetilde g_{k+1}-\widetilde g_k=L^Ty_k$.  We also have
\[
\widetilde s_k^T\widetilde y_k=s_k^Ty_k,
\qquad
\widetilde y_k^T\widetilde H_k\widetilde y_k=y_k^TH_ky_k.
\]
Substitution in the DFP update gives
$\widetilde H_{k+1}=L^{-1}H_{k+1}L^{-T}$, so the complete DFP recurrence is
preserved.

The line functions and their directional derivatives are also unchanged:
\[
\widetilde f(z_k+\alpha\widetilde d_k)
=f(x_k+\alpha d_k),\qquad
\nabla\widetilde f(z_k+\alpha\widetilde d_k)^T\widetilde d_k
=\nabla f(x_k+\alpha d_k)^Td_k.
\]
The Armijo and strong Wolfe curvature inequalities hold with the same
constants.  Also,
\[
\nabla^2\widetilde f(z)=L^T\nabla^2f(Lz)L,
\]
and Theorem~\ref{thm:main} gives
\[
\frac12\lambda_{\min}(H_0)I
\preceq\nabla^2\widetilde f(z)
\preceq\frac32\lambda_{\max}(H_0)I.
\]
We may take
$\widetilde m=\frac12\lambda_{\min}(H_0)$ and
$\widetilde M=\frac32\lambda_{\max}(H_0)$.  In addition,
\[
\|\nabla\widetilde f(z_k)\|=\|L^Tg_k\|
\ge \sqrt{\lambda_{\min}(H_0)}\,\|g_k\|,
\]
so the positive gradient limit in Theorem~\ref{thm:main} gives a positive
lower limit.  Applying the same normalization after the orthogonal
direct-sum extension proves the statement for every $n\ge2$.
\end{proof}

\subsection{Relation to known sufficient conditions}
\label{sec:known-results}

The sequence lies outside the additional hypotheses in the known conditional
convergence theorems.  The exact expansions in
Appendix~\ref{app:algebra}, normalized by the gradient component $G_j$, give
\begin{align*}
\norm{g_{2j}}/G_j
 &=1+2\epsilon_j^4+O(\epsilon_j^6),\\
\norm{g_{2j+1}}/G_j
 &=1-2\epsilon_j^3-2\epsilon_j^4+O(\epsilon_j^6),\\
\norm{g_{2j+2}}/G_j
 &=1-\frac92\epsilon_j^4+O(\epsilon_j^6).
\end{align*}
The first iteration decreases the gradient norm by order $\epsilon_j^3$.  The
second increases it by the same order, so eventual monotonicity fails.  The
step lengths satisfy
\[
\norm{s_{2j}}/G_j=2\epsilon_j^2+o(\epsilon_j^2),
\qquad
\norm{s_{2j+1}}/G_j=\epsilon_j^2+o(\epsilon_j^2).
\]
Since $G_j\to G_\infty>0$ and $\sum_j\epsilon_j^2=\infty$, the total step
length $\sum_k\norm{s_k}$ diverges.  Neither Yuan's eventual gradient-norm
monotonicity condition nor his assumption $\sum_k\norm{s_k}<\infty$
\cite{Yuan1995} applies.  The construction also lies outside Xu's theorem:
for the final function, $m=1/2$ and $M=3/2$, but Xu
\cite{Xu1997} assumes, in addition to an eventual monotonicity condition on an
iteration-dependent scalar, that $c_2<(m/M)^3$.  Every parameter pair in
\eqref{eq:wolfe-range} instead satisfies
\[
c_2\ge\frac23>
\left(\frac{1/2}{3/2}\right)^3=\frac1{27},
\]
so its parameter hypothesis is violated.

Powell's uniformly convex theorem \cite{Powell1971} uses exact line search.
His later two-dimensional result \cite{Powell2000} assumes that each line
search finds the first local minimizer.  In both settings the accepted
endpoint is stationary along the search line.  Our steps instead satisfy
\[
g_{k+1}^Td_k=\alpha_k^{-1}g_{k+1}^Ts_k
=-\alpha_k^{-1}(1-\tau_k)q_k\ne0,
\]
so neither exact line-search assumption applies.  The sequence also fails to
converge: its accumulation set is the circle $\Gamma$.  It lies outside
Pu and Yu's conditional result for a convergent iterate sequence
\cite{PuYu1990}.  Pu's later theorem assumes, in addition, a uniformly
positive and bounded Hessian together with Hessian Lipschitz continuity.  In
Pu's notation, the theorem replaces the fixed Wolfe coefficients by
$\rho_k^{\mathrm{Pu}}=\rho\xi_k$ and
$\sigma_k^{\mathrm{Pu}}=\sigma\xi_k$, with, for example,
\[
\xi_k=\min\left\{1,
\frac{\|R_kH_kg_k\|}{\|Q_kg_k\|}\right\},
\]
where the positive semidefinite matrices $R_k$ and positive definite matrices
$Q_k$ satisfy additional uniform spectral conditions
\cite{Pu2002}.  This metric-dependent requirement can make the corresponding
directional-derivative ratio much smaller than the fixed constant in the
standard strong Wolfe condition.  It does not follow from
\eqref{eq:armijo} and \eqref{eq:strong-curvature}.  The result of Liu, Jing,
and Han uses their proposed inexact line search \cite{LiuJingHan2002}.  It
does not cover arbitrary accepted steps that satisfy only the two standard
strong Wolfe inequalities.  Powell's 1976 result concerns BFGS
\cite{Powell1976}.  The global Broyden-class theorem of Byrd, Nocedal, and
Yuan \cite{ByrdNocedalYuan1987} excludes DFP.  The counterexample does not
contradict any of these results.  Each theorem has an extra assumption that
the constructed sequence does not satisfy.

\subsection{Scope of the construction}

The iteration uses the standard inverse-Hessian form of the DFP update.  Its
fixed positive definite matrix $H_0$ comes from
\eqref{eq:coordinate-representation} at a chosen point on the invariant center
manifold.  Once $(c_1,c_2)$ is fixed in \eqref{eq:wolfe-range}, we choose the
initial parameter sufficiently small and then fix the objective
\eqref{eq:objective}.  Its Hessian satisfies the same global bounds at every
point.  The construction uses no exact line search, damping, restart, or
projection.  It also does not assume a uniform bound on the condition numbers
of $H_k$. The matrices in \eqref{eq:alternating-secant-data} generate the prescribed
secant pairs.  We do not claim that they are Hessians of $f$ at the iterates.
After the gradients have been interpolated, the actual secants of $f$ equal
the prescribed secants.  A search segment may cross the support of another
interpolation function.  This does not affect the proof.  The Wolfe tests use
only endpoint values and endpoint directional derivatives.  The fundamental
theorem of calculus, applied to the gradient along the segment, gives the same
prescribed secant vector.

\section{Numerical experiments}
\label{sec:numerics}

The experiments show the geometry of the construction, compare DFP and BFGS
on the same finite objective, and check the main asymptotic identities.  The
main comparisons use two complementary settings.  The first follows the
prescribed DFP sequence at a scale that makes its limiting geometry visible
and checks the recurrence used in the proof.  The second uses a specific
strong Wolfe routine to compare DFP and BFGS on finite objectives with
verified global Hessian bounds.  We finish with a sensitivity check for the
initial trial step used by the line search.  The solid curves in
Figures~\ref{fig:orbit-comparison} and
\ref{fig:finite-gradient-comparison} join computed iterates.  They use no
smoothing, regression, or fitted trajectory.  The dashed circle in
Figure~\ref{fig:orbit-comparison} comes from the proved asymptotic recurrence;
it is not fitted to the plotted points.  None of the experiments is used in
the proof.

\begin{figure}[H]
\centering
\includegraphics[width=0.94\textwidth]{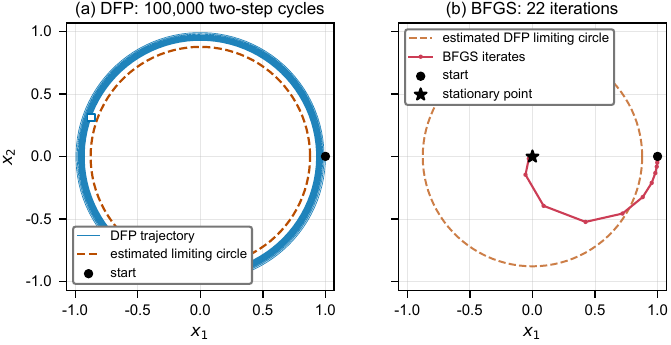}
\caption{Computed DFP and BFGS trajectories for $\epsilon_0=0.03$.
(a) The prescribed DFP sequence over $10^5$ two-step cycles.  All prescribed
steps satisfy the standard strong Wolfe conditions; no claim is made that a
particular line-search code selects the infinite sequence.  The dashed circle
is the asymptotic estimate $G_N\exp(-13\epsilon_N/3)$ centered at $C_N$, and the
open square marks the final iterate.  (b) BFGS on the corresponding finite
$C^2$ interpolant, with the same starting point and initial inverse-Hessian
matrix and a strong Wolfe line search initialized at $\alpha=1$.  The star
marks its stationary point.  The larger value of $\epsilon_0$ is used only to
show the geometry.}
\label{fig:orbit-comparison}
\end{figure}

At $\epsilon_0=0.0025$, which is used for the same-objective comparison below,
the radial drift is only about one percent.  It is almost invisible at journal
size.  For this reason, Figure~\ref{fig:orbit-comparison} uses the larger
display value $\epsilon_0=0.03$.  The recurrence is unchanged.
Figure~\ref{fig:orbit-comparison}(a) follows the two-step construction of
Section~\ref{sec:alternating-sequence} for $10^5$ cycles, or $2\times10^5$
DFP iterations.  At every iteration the code forms $d_k=-H_kg_k$, uses the
positive step length prescribed by the construction, and applies the
unmodified inverse DFP update.  All accepted steps satisfy Armijo and the
standard strong Wolfe curvature inequality with
$(c_1,c_2)=(0.25,0.75)$.  The computed orbit completes $14.5548$ turns and its
gradient norm decreases from $1.0000016$ to $0.9268896$.  If $N=10^5$ denotes
the last cycle, the dashed circle is centered at the computed $C_N$ and has
radius
\[
  \widehat G_\infty=G_N\exp(-13\epsilon_N/3)=0.8770088.
\]
This first-order estimate of the positive limit comes from the asymptotic law.
It is not a least-squares circle.  The open square marks the final DFP iterate.

Figure~\ref{fig:orbit-comparison}(b) uses the same display value, starting
point, and initial inverse-Hessian matrix, but replaces DFP by the classical
inverse BFGS update.  The new update changes the search direction after the
first secant pair, so the DFP endpoints cannot be treated as a BFGS
trajectory.  We instead run BFGS on the corresponding finite $C^2$
interpolant, which can be evaluated directly.  We call the
\texttt{scipy.optimize.line\_search} routine in SciPy~1.17.0
\cite{VirtanenEtAl2020} with
$(c_1,c_2)=(0.25,0.75)$, \texttt{amax=64}, and \texttt{maxiter=40}.  No
previous step estimate is supplied, so the routine tests $\alpha=1$ first and
then uses its bracketing and zoom procedure if that trial is rejected.  The
line search selects the displayed path, and the gradient norm reaches
$3.32\times10^{-12}$ in $22$ iterations.  At the larger value
$\epsilon_0=0.03$, the available bound on the smooth corrections does not
prove global convexity.  We use Figure~\ref{fig:orbit-comparison} only to show
the geometry.

The DFP run in Figure~\ref{fig:orbit-comparison}(a) also provides a numerical
check of the main asymptotic and algebraic identities.  Table~\ref{tab:recurrence-verification}
compares the three leading coefficients with medians over the last $10{,}000$
cycles and reports the largest algebraic residuals.  The DFP update is
unmodified: no damping, restart, projection, or matrix reset is used.  The
dashed circle in Figure~\ref{fig:orbit-comparison} uses the positive-limit
estimate given above.  That estimate comes from the asymptotic formula and is
not fitted to the orbit.  The first normalized coefficient converges more
slowly because, after division by $\epsilon_j^4$, its expansion contains the
correction $(5/4)\epsilon_j+O(\epsilon_j^2)$.

\begin{table}[H]
\caption{Numerical checks for the prescribed DFP sequence.  The first three
numerical values are medians over the final $10{,}000$ cycles; the two
residuals are maxima over $2\times10^5$ steps.}
\label{tab:recurrence-verification}
\centering
\small
\setlength{\tabcolsep}{3.5pt}
\begin{tabular}{@{}p{6.1cm}cc@{}}
\toprule
quantity & theoretical value & numerical value\\
\midrule
$(\epsilon_{j+1}-\epsilon_j)/\epsilon_j^4$
  & $-3/2$ & $-1.48384$\\
$(G_{j+1}/G_j-1)/\epsilon_j^4$
  & $-13/2$ & $-6.49652$\\
$(\phi_{j+1}-\phi_j)/\epsilon_j^2$
  & $-3$ & $-3.00009$\\
ratios in the strong Wolfe condition, first and second iterations
  & $1/3,\ 2/3$ & $0.33333333,\ 0.66666667$\\
$\max_k|s_k^Ty_k/q_k-\tau_k|$
  & $0$ & $0.000000000000276$\\
$\max_k\|H_{k+1}y_k-s_k\|/\|s_k\|$
  & $0$ & $0.000000000000000661$\\
Armijo / strong Wolfe curvature failures
  & $0/0$ & $0/0$\\
\bottomrule
\end{tabular}
\end{table}
\FloatBarrier

We next compare the two methods on one finite objective with verified global
Hessian bounds.  We take a finite sum of the interpolation functions with
$8{,}004$ prescribed endpoints and set $\epsilon_0=0.0025$.  The global
Hessian bound is
\[
0.5532628 I\preceq\nabla^2 f(x)\preceq1.4467372 I
\qquad(x\in\mathbb R^2),
\]
which lies inside the Hessian bounds of Theorem~\ref{thm:main}.  Both methods
start from the same $x_0$ and $H_0$ and use the strong Wolfe routine and
parameter settings described with Figure~\ref{fig:orbit-comparison}(b).  Here
``unit first'' means that the routine first tests $\alpha=1$, not that the
step is fixed at one.  Only the inverse quasi-Newton update differs between
the two runs.

Figure~\ref{fig:finite-gradient-comparison} plots the gradient norms.  The DFP
run stays close to one over $5{,}000$ iterations and ends at $0.9999993653$.
BFGS reaches the stopping tolerance in $32$ iterations, with final gradient
norm $2.22\times10^{-11}$.  Every accepted step in both runs satisfies Armijo
and strong Wolfe.  All curvature denominators are positive, and the
inverse-Hessian approximations remain positive definite.  DFP accepts
$\alpha=1$ in all $5{,}000$ displayed iterations.  BFGS accepts $\alpha=2$ at
iteration $2$ and $\alpha=1$ at every other iteration.
For BFGS, an independent weak-Wolfe implementation accepts exactly the same
steps as the strong-Wolfe routine in this experiment.  The long DFP transient
on a finite objective does not prove infinite nonconvergence.
Theorem~\ref{thm:main} gives that result.

\begin{figure}[H]
\centering
\includegraphics[width=0.56\textwidth]{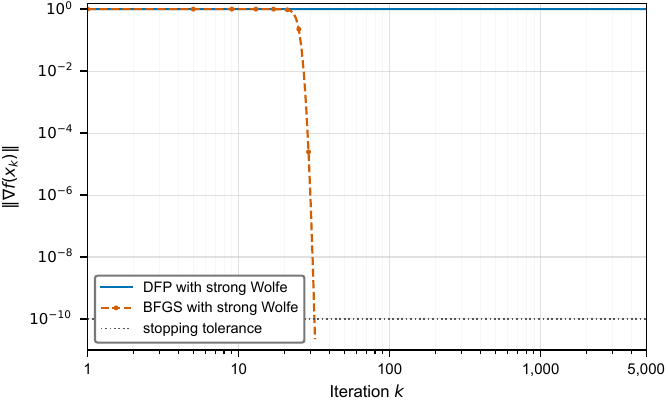}
\caption{Gradient norms for DFP and BFGS on the same finite $C^2$ objective
with $\epsilon_0=0.0025$.  Both methods use the same starting point and initial
inverse-Hessian matrix and a strong Wolfe routine initialized at $\alpha=1$
with $(c_1,c_2)=(0.25,0.75)$.  DFP is shown for $5{,}000$ iterations; BFGS
reaches the displayed tolerance after $32$ iterations.  No fitted values are
used.}
\label{fig:finite-gradient-comparison}
\end{figure}
\FloatBarrier

Table~\ref{tab:finite-comparison} repeats the comparison at three values of
$\epsilon_0$ with verified global Hessian bounds.
The DFP column reports the gradient norm after a fixed budget of $5{,}000$
iterations, not a stopping time.  The BFGS column reports the first iterate
satisfying $\|\nabla f\|\le10^{-10}$.  The accepted weak- and strong-Wolfe
BFGS trajectories agree point for point at all three parameter values.

\begin{table}[!htbp]
\caption{DFP and BFGS with a strong Wolfe line search on finite $C^2$
objectives satisfying the displayed Hessian bounds.  No Armijo or
strong-curvature failures occurred.}
\label{tab:finite-comparison}
\centering
\small
\setlength{\tabcolsep}{3.7pt}
\begin{tabular}{@{}ccccc@{}}
\toprule
$\epsilon_0$ & Hessian eigenvalue bounds &
$\|g_{5000}^{\mathrm{DFP}}\|$ & BFGS iterations &
final $\|g^{\mathrm{BFGS}}\|$\\
\midrule
$0.0010$ & $[0.82140,1.17860]$ & $0.99999998$ & $36$ & $7.38\times10^{-12}$\\
$0.0020$ & $[0.64268,1.35732]$ & $0.99999974$ & $33$ & $1.47\times10^{-11}$\\
$0.0025$ & $[0.55326,1.44674]$ & $0.99999937$ & $32$ & $2.22\times10^{-11}$\\
\bottomrule
\end{tabular}
\end{table}
\FloatBarrier

Finally, we examine how the initial trial step in the line search affects the
finite DFP trajectory.  Let $\rho_k$ be the support radius and define
\[
E_k=\frac{\|x_k^{\mathrm{LS}}-x_k^{\mathrm{ref}}\|}{\rho_k},
\qquad
k_{1/3}=\min\{k:E_k>1/3\}.
\]
Here $x_k^{\mathrm{LS}}$ is produced by the line search, and
$x_k^{\mathrm{ref}}$ is the prescribed DFP endpoint.  We use eight finite
objectives with verified global Hessian bounds and
$0.0005\le\epsilon_0\le0.0025$.  A separate log--log least-squares fit gives
$k_{1/3}=\exp(-2.05218)\epsilon_0^{-1.49730}$ with $R^2=0.999981$.  This is the
only fitted quantity in the numerical section, and it does not appear in
either figure.  For $\epsilon_0=0.001$ and $0.002$, the first indices with
$E_k>1/3$ are $3{,}997$ and $1{,}411$, respectively.  The first indices with
$E_k>1$ are $10{,}141$ and $3{,}441$.  Three independent unit-first Wolfe
implementations generate identical accepted trajectories over $25{,}299$ and
$8{,}945$ tested iterations, respectively.  Every accepted step in the
recorded runs satisfies the standard strong Wolfe conditions.  Thus agreement
among the implementations persists beyond the interval in which the computed
trajectory remains within one support radius of the prescribed reference
sequence.

If the initial trial step is instead based on the previously accepted step,
both independent strong-Wolfe routines accept $\alpha_3=0.841685$ for
$\epsilon_0=0.001$ and $\alpha_3=0.841667$ for $\epsilon_0=0.002$.  In each
case, the accepted point leaves the corresponding support ball.  A unit-first
policy can therefore shadow the prescribed sequence for a long finite
interval, but the Wolfe inequalities alone do not select a unique infinite
sequence.

\FloatBarrier

\section{Conclusion}
\label{sec:conclusion}

The classical weak Wolfe global convergence question for unmodified DFP has a
negative answer.  For every $0<c_1<2/3$ and $2/3\le c_2<1$, we construct a
$C^2$, uniformly convex objective on $\R^2$ with
$\frac12I\preceq\nabla^2f\preceq\frac32I$.  The steps of the classical DFP
sequence satisfy the standard strong Wolfe conditions, but the gradient norms
converge to a positive constant.  The standard strong Wolfe conditions alone
do not guarantee global convergence of DFP.  Between successive
cycle starts, the changes in the gradient norm are summable, but the total
eigenvector rotation is infinite.  Uniform separation of the iterates allows
us to interpolate all endpoint data with one objective.  An affine change of
variables gives a version with $H_0=I$ and problem-dependent Hessian bounds.
Existing DFP convergence results require extra assumptions that this sequence
does not satisfy.  We leave open whether similar examples exist for $c_2<2/3$
or for objectives smoother than $C^2$.

\appendix

\section{Algebraic verification of the two-step DFP recurrence}
\label{app:algebra}

\subsection{One DFP update in matrix entries and spectral coordinates}

This appendix gives the algebraic details used in
Lemmas~\ref{lem:center-manifold} and \ref{lem:two-step-expansions}.  The proof
uses the formulas below.  Work in an orthonormal eigenbasis and write
\[
H=\begin{pmatrix}\ell&0\\0&\eta\end{pmatrix},\qquad
g=\binom{g_1}{g_2},\qquad
A=\begin{pmatrix}a&c\\c&d\end{pmatrix}.
\]
Set
\[
v_1=\ell g_1,\quad v_2=\eta g_2,\quad
w_1=av_1+cv_2,\quad w_2=cv_1+dv_2,
\]
and
\[
\delta=g_1v_1+g_2v_2,\qquad
\beta=v_1w_1+v_2w_2,
\qquad \gamma=\ell w_1^2+\eta w_2^2.
\]
Proposition~\ref{prop:one-step} gives, entry by entry,
\begin{align}
(H_+)_{11}&=\ell-\frac{\ell^2w_1^2}{\gamma}
                    +\frac{v_1^2}{\beta},
&
(H_+)_{12}&=-\frac{\ell\eta w_1w_2}{\gamma}
                    +\frac{v_1v_2}{\beta},
\label{eq:appendix-H-entries}\\
(H_+)_{22}&=\eta-\frac{\eta^2w_2^2}{\gamma}
                    +\frac{v_2^2}{\beta},
&
(g_+)_i&=g_i-\tau\frac{\delta}{\beta}w_i.
\nonumber
\end{align}
These identities also give
\begin{equation}\label{eq:dfp-determinant}
\det H_+=\det H\,\frac{s^Ty}{y^THy},
\end{equation}
either by a two-by-two determinant expansion or by the matrix determinant
lemma.

For the explicit formulas in Section~\ref{sec:alternating-sequence}, it is
also useful to write the same update for $B=H^{-1}$.  Since
$Bs=-\alpha g$ and $y=-\alpha w$, the corresponding Hessian update is
\begin{equation}\label{eq:appendix-B-update}
B_+
=\left(I-\frac{wv^T}{\beta}\right)
B\left(I-\frac{vw^T}{\beta}\right)
+\frac{ww^T}{\beta}
=B-\frac{wg^T+gw^T}{\beta}
+\frac{\delta+\beta}{\beta^2}ww^T.
\end{equation}
This is the inverse of the DFP update; no additional update is introduced.

For the matrices used in the alternating two-step construction, the exact
expressions are explicit.  Start from the coordinate representation
\eqref{eq:coordinate-representation} and let
$A(\mu)=\left(\begin{smallmatrix}1&\mu\\\mu&1\end{smallmatrix}\right)$.
Then
\begin{align}
v&=Ghp r\binom{r}{1},
&w&=Ghp r\binom{r+\mu}{1+\mu r},
\label{eq:generic-vw}\\
\delta&=G^2hp(p+1)r^2,
&\beta&=G^2h^2p^2r^2D_\mu,
\label{eq:generic-delta-beta}\\
\gamma&=G^2h^3p^2r^2
\{pr^2(r+\mu)^2+(1+\mu r)^2\},
&D_\mu&=1+2\mu r+r^2,
\label{eq:generic-gamma}
\end{align}
and
\begin{equation}\label{eq:generic-alpha}
\alpha=\tau\frac{p+1}{hpD_\mu}.
\end{equation}
Equations \eqref{eq:appendix-H-entries} and
\eqref{eq:generic-vw}--\eqref{eq:generic-alpha} are exact rational formulas.
The first iteration uses $(\mu,\tau)=(b,2/3)$; after expressing the updated
quantities in the new eigenbasis, the second uses
$(\mu,\tau)=(-2b,1/3)$.  The coefficients for a complete cycle follow from two
applications of these identities and the spectral formulas below.

For a symmetric matrix
$K=\left(\begin{smallmatrix}a_0&b_0\\b_0&d_0\end{smallmatrix}\right)$,
put
\[
\Delta=\sqrt{(d_0-a_0)^2+4b_0^2},\qquad
\lambda_\pm=\frac{a_0+d_0\pm\Delta}{2}.
\]
Near the limiting matrix $\operatorname{diag}(0,1)$, an analytic eigenvector
associated with $\lambda_-$ is
\[
u_-=
\frac{(d_0-\lambda_-,-b_0)^T}
{\sqrt{(d_0-\lambda_-)^2+b_0^2}},
\qquad u_+=u_-^\perp.
\]
Reverse both signs if needed so that $\gamma_-=u_-^Tg>0$, and write
$\gamma_+=u_+^Tg$.  With the eigenvector choice used in the construction,
$\gamma_+>0$ for all sufficiently small $\epsilon>0$; its leading term is a
positive multiple of $\epsilon^2$.  The denominators below are nonzero.
Comparison with \eqref{eq:coordinate-representation} recovers the
parameters by the exact formulas
\begin{equation}\label{eq:parameter-recovery}
\begin{aligned}
G&=\gamma_-, & h&=\lambda_+,\\
r&=\frac{\lambda_-\gamma_-}{\lambda_+\gamma_+}, &
p&=\frac{\lambda_+\gamma_+^2}{\lambda_-\gamma_-^2}.
\end{aligned}
\end{equation}
Equations \eqref{eq:appendix-H-entries} and
\eqref{eq:parameter-recovery} give the complete calculation for one DFP
iteration and are used for both iterations of each cycle.

\subsection{Two-step expansion and the invariant graph}

To identify terms involving both small variables, replace $\epsilon$ in the
two matrices by an independent variable $b$, treat $r$ as an independent
variable, and set
\[
p=2+Pbr,\qquad h=1+Jbr.
\]
Applying \eqref{eq:appendix-H-entries}, diagonalizing with the preceding
quadratic formula, and recovering the parameters with
\eqref{eq:parameter-recovery} gives
\begin{align}
r_+-r
 &=\frac{b(6J+5P-300)}{18}r^2+O(r^3),
\label{eq:appendix-r}\\
p_+-2
 &=\frac{b(6J-P+348)}9r+O(r^2),
\label{eq:appendix-p}\\
h_+-1&=8br+O(r^2),
\label{eq:appendix-h}\\
\frac{G_+}{G}-1
 &=\frac{b^2(24J-4P+384)-117}{18}r^2+O(r^3),
\label{eq:appendix-G}\\
\phi_+-\phi&=-3r+O(r^2),
\label{eq:appendix-phi}\\
e^T(C_+-C)
 &=-\frac{2b^2(6J-P+96)}9Gr^2+O(G|b|r^3).
\label{eq:appendix-C}
\end{align}
The remainders are uniform for bounded $(b,P,J)$ in the fixed Taylor
neighborhood.  In particular, \eqref{eq:appendix-C} is a joint estimate; only
after setting $b=\epsilon$ and $r=\epsilon^2$ is its remainder
$o(G\epsilon^6)$.

For a general invariant graph tangent to the center subspace, first write
$z=(p-2,h-1)^T=z_2\epsilon^2+O(\epsilon^3)$.  The order-$\epsilon^2$
invariance equation is $z_2=Lz_2$, and $1\notin\sigma(L)$, so $z_2=0$.
For the remaining expansion
\[
p=2+P_3\epsilon^3+P_4\epsilon^4+O(\epsilon^5),\qquad
h=1+H_3\epsilon^3+H_4\epsilon^4+O(\epsilon^5),
\]
the coefficients of $p_+-p(\epsilon_+)$ and
$h_+-h(\epsilon_+)$ are as follows:
\begin{center}
\begin{tabular}{c@{\qquad}c@{\qquad}c}
\toprule
order & $p$-component & $h$-component\\
\midrule
$\epsilon^3$ & $\frac23H_3-\frac{10}{9}P_3+\frac{116}{3}$
              & $8-H_3$\\[2pt]
$\epsilon^4$ & $\frac23H_4-\frac{10}{9}P_4-2$
              & $-H_4$\\
\bottomrule
\end{tabular}
\end{center}
Setting these four entries to zero gives
$(P_3,H_3,P_4,H_4)=(198/5,8,-9/5,0)$, exactly as in
\eqref{eq:graph-coefficient-system}.

\subsection{Coefficients on the invariant center manifold}

The expansions above still contain the coefficients $P_3,H_3,P_4,H_4$.
Restricting them to the invariant center manifold determines the values used
in Lemmas~\ref{lem:center-manifold} and~\ref{lem:two-step-expansions}.
Substituting these four coefficients into the two-step formulas gives the
following expansions.  Where needed, they are normalized by the gradient
component $G_j$ at iteration $2j$:
\begin{align*}
\epsilon_+
 &=\epsilon-\frac32\epsilon^4+\frac54\epsilon^5+O(\epsilon^6),\\
\frac{G_+}{G}
 &=1-\frac{13}{2}\epsilon^4+\frac{116}{5}\epsilon^6
   -\frac{976}{5}\epsilon^7+O(\epsilon^8),\\
\phi_+-\phi
 &=-3\epsilon^2-\frac{196}{5}\epsilon^5
   +\frac{28}{5}\epsilon^6+O(\epsilon^7),\\
R_j^T(C_{2j+2}-C_{2j})/G_j
 &=\binom{-\frac{116}{5}\epsilon^6+\frac{38}{5}\epsilon^7
           +O(\epsilon^8)}
          {-\frac{508}{5}\epsilon^8+O(\epsilon^9)}.
\end{align*}
The two consecutive angle increments are
\begin{align*}
-2\epsilon^2-\frac{122}{5}\epsilon^5
 +\frac{88}{15}\epsilon^6+O(\epsilon^7),\qquad
-\epsilon^2-\frac{104}{5}\epsilon^5
 +\frac{71}{15}\epsilon^6+O(\epsilon^7).
\end{align*}
The corresponding step and gradient norms are
\begin{align*}
\frac{\norm{s_{2j}}}{G_j}
 &=2\epsilon^2+\frac{112}{5}\epsilon^5
   -\frac{11}{5}\epsilon^6+O(\epsilon^7),\\
\frac{\norm{s_{2j+1}}}{G_j}
 &=\epsilon^2+\frac{114}{5}\epsilon^5
   -\frac{49}{10}\epsilon^6+O(\epsilon^7),\\
\frac{\norm{g_{2j}}}{G_j}
 &=1+2\epsilon^4+O(\epsilon^6),\\
\frac{\norm{g_{2j+1}}}{G_j}
 &=1-2\epsilon^3-2\epsilon^4-\frac{112}{5}\epsilon^6
   +O(\epsilon^7),\\
\frac{\norm{g_{2j+2}}}{G_j}
 &=1-\frac92\epsilon^4+\frac{116}{5}\epsilon^6
   +O(\epsilon^7).
\end{align*}
The identity \eqref{eq:line-ratio} also gives the exact ratios
$s^Ty/(-g^Ts)=2/3$ and $1/3$ on the first and second iterations,
respectively.  Every coefficient used in the
asymptotic recurrence, separation argument, comparison with known results, and
Wolfe verification follows from the matrix-entry formulas in this appendix.

\section*{Acknowledgments}

Generative AI assisted manuscript preparation and parts of the mathematical
and computational work. The authors verified all results and assume
responsibility for all content.

\end{document}